\patchcmd{\thenomenclature}{\section*}{\section}{}{}
\newcommand{\nomenclheader}[1]{%
  \item[\hspace*{-\itemindent}\normalfont\itshape#1]}
\renewcommand\nomgroup[1]{%
  \bigskip
  \IfStrEqCase{#1}{%
   {A}{\nomenclheader{Numerical Analysis}}%
   {B}{\nomenclheader{Probability}}%
   {C}{\nomenclheader{Stochastic Gradient Method and Stopping Criteria}}%
   {D}{\nomenclheader{Conditions on Nonconvexity}}%
  }%
  
}
\newcommand{\norm}[1]{\left\Vert #1 \right\Vert}
\newcommand{\1}[1]{\textbf{1}\left[ #1 \right]}
\newcommand{\Prb}[1]{\mathbb{P}\left[ #1 \right]}
\newcommand{\E}[1]{\mathbb{E}\left[ #1 \right]}
\newcommand{\cond}[2]{\mathbb{E}\left[\left. #1 \right\vert #2 \right]}
\newcommand{\condPrb}[2]{\mathbb{P}\left[\left. #1 \right\vert #2 \right]}
\newcommand{\leqnomode}{\tagsleft@true\let\veqno\@@leqno}
\newcommand{\reqnomode}{\tagsleft@false\let\veqno\@@eqno}
\newcommand{\condition}[3]{
\begin{center}
\noindent\fbox{
\parbox{\dimexpr\textwidth-3em}{
\leqnomode
\begin{equation}\label{#1}
\parbox{\dimexpr\linewidth-5.5em}{#2}\tag{\textbf{#3}}
\end{equation}}
}
\reqnomode
\end{center}
}
\begin{document}
\reqnomode
\title{Stopping Criteria for, and Strong Convergence of, Stochastic Gradient Descent on Bottou-Curtis-Nocedal Functions
\thanks{This work is supported by the Wisconsin Alumni Research Foundation.}
}
\subtitle{}

\titlerunning{Stopping and Strong Convergence}        

\author{Vivak Patel}


\institute{Vivak Patel \at
              Department of Statistics \\
              University of Wisconsin -- Madison \\
              \email{vivak.patel@wisc.edu}}


\maketitle

\begin{abstract}
Stopping criteria for Stochastic Gradient Descent (SGD) methods play important roles from enabling adaptive step size schemes to providing rigor for downstream analyses such as asymptotic inference. Unfortunately, current stopping criteria for SGD methods are often heuristics that rely on asymptotic normality results or convergence to stationary distributions, which may fail to exist for nonconvex functions and, thereby, limit the applicability of such stopping criteria. To address this issue, in this work, we rigorously develop two stopping criteria for SGD that can be applied to a broad class of nonconvex functions, which we term Bottou-Curtis-Nocedal functions. Moreover, as a prerequisite for developing these stopping criteria, we prove that the gradient function evaluated at SGD's iterates converges strongly to zero for Bottou-Curtis-Nocedal functions, which addresses an open question in the SGD literature. As a result of our work, our rigorously developed stopping criteria can be used to develop new adaptive step size schemes or bolster other downstream analyses for nonconvex functions. 

\keywords{Stochastic Gradient Descent \and Nonconvex \and Stopping Criteria \and Strong Convergence}
\end{abstract}

\section{Introduction} \label{section-introduction}
In data-driven and simulation-based disciplines, the optimization problem
\begin{equation} \label{problem}
\min_{\theta} F(\theta)
\end{equation}
is frequently solved, 
where $F(\theta) = \E{ f(\theta,X)}$; $f: \mathbb{R}^p \times \mathbb{R}^d \to \mathbb{R}$; $X$ is a random variable with arbitrary support; and $\mathbb{E}$ is the expectation operator. Depending on $X$, the optimization problem's objective function and its gradient may be impractical or impossible to evaluate directly \cite{bertsekas2011}. Fortunately, when certain regularity conditions hold on $f$ and $X$ (e.g., \cite{bottou2018,wu2009}), the optimization problem's structure is exploited to generate solvers that use the gradient of $f$ with respect to the argument $\theta$ for independent copies of $X$ \cite{robbins1951,chung1954}. Moreover, when the gradient of $f$ is significantly cheaper to compute than the gradient of $F$, the optimization problem can be efficiently solved using these so-called Stochastic Gradient Descent (SGD) methods \cite{bertsekas2011}.

Despite the potential efficiency of SGD methods, they are fickle algorithms as they are highly sensitive to the choice of step sizes \cite{nemirovski2009,patel2017}; and, consequently, SGD robustification using adaptive stepping procedures has been an active area of research since the inception of stochastic approximation procedures \cite{kiefer1952,farrell1962,fabian1967,sielken1973,stroup1982,ermoliev1983,mirozahmedov1983}.\footnote{For a review of recent approaches to adaptive step size procedures, see \cite{curtis2020}.} Critically, SGD methods with adaptive stepping procedures lay out \textit{criteria} for when to \textit{stop} the current step size schedule, and then provide a procedure for how to modify it. Thus, SGD's robustification is inherently tied to stopping criteria (see \S 17 of \cite{pflug1988}).

While their utility in the adaptive step size selection for SGD methods cannot be overstated, stopping criteria play other, equally important roles. In the deterministic setting, stopping criteria determine whether asymptotic inference tools can be applied (e.g., Theorem 5.23 in \cite{vandervaart2000}), serve as a proxy for generalization quality \cite{prechelt1998},\footnote{There has been some recent work that disagrees with whether early termination leads to better generalization. See \cite{bassily2018} and related work. However, even in this case, one needs to know that a minimizer is achieved.} and indicate when to terminate an algorithm. Even in the stochastic setting, these applications of stopping criteria are essential and have warranted investigation. 

Historically and even recently, stopping criteria---either studied independently or within the context of adaptive stepsize rules---have focused almost entirely on either determining when a confidence region of the iterates contains a strict minimizer \cite{sielken1973,stroup1982,yin1990,wada2010,patel2016}, or have focused on determining when the iterates satisfy a notion of statistical stationarity \cite{mirozahmedov1983,chee2018,zhang2020}. Unfortunately, such stopping criteria are often limited on modern problems. For example, stopping criteria that rely on finding confidence regions that contain a strict minimizer are often limited to strongly convex problems,\footnote{We can include those problems that are nonconvex, yet are locally strongly convex around minimizers.} and are not applicable more broadly, say, to the increasingly common class of overparametrized, nonconvex optimization problems found in machine learning \cite{bassily2018}. Moreover, such confidence-region stopping criteria have had limited rigorous development except in special cases \cite{pflug1988,wada2010,patel2016}.

Thus, stopping criteria that rely on statistical stationarity would seem preferable, yet these criteria face two difficulties. First, such stopping criteria are generally only employable with sufficiently small step sizes (e.g., \cite{zhang2020}), where the degree of smallness has yet to be clearly specified. Additionally, such stopping criteria are rather brittle for general nonconvex problems for two reasons: (a) a stationary distribution need not exist, especially with degenercy of the solution manifold; and (b) such stopping criteria make use of only a single iterate sequence (i.e., a single ``Markov chain'') from which to determine statistical stationarity, which is highly unreliable based on more rigorous stationarity detection methods used in Markov Chain Monte Carlo methods \cite{roy2020}. Unfortunately, just as for confidence-region stopping criteria, these statistical stationarity stopping criteria for stochastic optimization have had limited rigorous development, excepting special cases \cite{mirozahmedov1983,pflug1988,chee2018}.

To begin addressing the aforementioned limitations, we expand the scope of stopping criteria for stochastic optimization by rigorously developing analogues to gradient-based stopping criteria used in deterministic optimization. By taking this gradient based perspective to stopping criteria,
\begin{enumerate}
\item we are able to supply stopping criteria that are applicable to a rather broad set of nonconvex problems, which already goes beyond the scope of confidence-region and statistical stationarity stopping criteria; and,
\item by focusing on a rigorous understanding of our stopping criteria, we demonstrate that the gradient of $F$ evaluated at the iterates converges to zero with probability one, which improves the analysis of stochastic gradient methods for a broad class of nonconvex problems and noise models.\footnote{Including a number of reports that came out after a preprint of this work was made public. See \cite{gower2020,khaled2020,mertikopoulos2020}.}
\end{enumerate}
As a result, our stopping criteria lay a rigorous foundation for developing adaptive step size algorithms, and our stopping criteria can be leveraged for other downstream analyses such as developing statistical uncertainty sets.

The remainder of this paper is organized as follows. In \S2, we provide an overview of important notation. In \S\ref{section-rigorous}, we detail the challenges of rigorously developing stopping criteria, and how our work contributes to the analysis of stochastic gradient descent on nonconvex functions. In \S\ref{section-structure}, we review, catalog, and organize common conditions on $F$ that specify its nonconvexity; and, in \S\ref{subsection-bcn-specification}, we specify a general class of nonconvex functions which we refer to as Bottou-Curtis-Nocedal functions \cite{bottou2018}. In \S\ref{section-stability-convergence}, we precisely specify the stochastic gradient descent iterates (\S \ref{subsection-strong-convergence}), and prove that the gradient evaluated at the iterates converge strongly to zero. In \S \ref{section-stopping}, we introduce and analyze our two stopping criteria. In \S \ref{section-experiment}, we examine the behavior of our stopping criteria on a real data analysis task. In \S \ref{section-conclusion}, we conclude this work.

\label{section-notation}

\renewcommand{\nompreamble}{Below is a list of notation that is used throughout the text. The notation is organized into several groups: notation related to numerical analysis; notation related to probability; notation related the stochastic gradient method and stopping criteria; notation related to conditions placed on the objective function.}

\nomenclature[A, 1]{$F$}{Objective function that maps $\mathbb{R}^p \to \mathbb{R}$. Also referred to as ``Deterministic Objective.''}
\nomenclature[A, 2]{$\dot F$}{Gradient function of the deterministic objective that maps $\mathbb{R}^p \to \mathbb{R}^p$. Also referred to as ``Deterministic Gradient.''}
\nomenclature[A, 3]{$f$}{A function that maps $\mathbb{R}^p \times \mathbb{R}^d \to \mathbb{R}$. Used to construct the deterministic objective, and is referred to as the ``Stochastic Objective.''}
\nomenclature[A, 4]{$\dot f$}{Gradient function of the stochastic objective with respect to the first argument, and maps $\mathbb{R}^p \times \mathbb{R}^d \to \mathbb{R}^p$. Also referred to as the ``Stochastic Gradient.''}
\nomenclature[A, 5]{$\theta$}{The parameter to be optimized in the objective function. Takes value in $\mathbb{R}^p$.}
\nomenclature[A, 6]{$\norm{\cdot}_2$}{If the argument is a vector, this refers to the Euclidean norm of the vector. If the argument is a matrix, this refers to the Operator 2-norm.}
\nomenclature[A, 7]{$\lambda_{\max}$}{The largest eigenvalue corresponding to a given symmetric, positive definite matrix argument.}
\nomenclature[A, 8]{$\lambda_{\min}$}{The smallest eigenvalue corresponding to the given symmetric, positive definite matrix argument.}
\nomenclature[A, 9]{$\kappa $}{The 2-norm condition number of the given matrix.}

\nomenclature[B, 1]{$\mathbb{P}$}{The probability operator with respect to the underlying Probability Space.}
\nomenclature[B, 2]{$\mathbb{E}$}{The expectation operator corresponding to the underlying Probability Space.}
\nomenclature[B, 3]{$X, X_k$}{Independent, identically distributed random variables that map from the underlying Probability Space to $\mathbb{R}^d$.}
\nomenclature[B, 4]{$\tau, T_j$}{Stopping times defined with respect to an appropriate filtration.}
\nomenclature[B, 5]{$\mathcal{F}_k$}{$\sigma$-algebras. $\lbrace \mathcal{F}_k \rbrace$ denotes the filtration generated by $\lbrace X_k \rbrace$.}
\nomenclature[B, 6]{$\Delta$}{Symmetric set difference operator.}
\nomenclature[B, 7]{$i.o.$}{Infinitely often.}
\nomenclature[B, 8]{$\mathbf{1}$}{The indicator function over a specified event.}

\nomenclature[C, 1]{$\beta_k$}{The iterates generated by Stochastic Gradient Descent (SGD).}
\nomenclature[C, 2]{$M_k$}{The matrix-valued learning rate used to generate the SGD iterates.}
\nomenclature[C, 3]{$K$}{A deterministic integer defined in Lemma \ref{lemma-minimality-smallest-eigenvalue}}
\nomenclature[C, 4]{$\psi_k$}{A resumed sequence of iterates of SGD defined with respect to a given finite stopping time.}
\nomenclature[C, 5]{$P_k$}{The resumed matrix-valued learning rate of SGD defined with respect to a given finite stopping time.}
\nomenclature[C, 6]{$Z_k$}{A resumed sequence of random variables, $\lbrace X_k \rbrace$, defined with respect to a given finite stopping time.}
\nomenclature[C, 7]{$\mathcal{G}_k$}{A resumed $\sigma$-algebra defined with respect to a stopped $\sigma$-algebra.}
\nomenclature[C, 8]{$S$}{A constant used to denote an upper bound for the sum of $\lambda_{\max}(M_k)^2$.}
\nomenclature[C, 90]{P}{A designation referring to the four properties (\ref{prop-sympd}, \ref{prop-maxeig}, \ref{prop-mineig}, \ref{prop-condnum}) required of SGD.}
\nomenclature[C, 91]{SC}{A designation referring to a stopping criterion. The abbreviation refers to ``Stopping Criterion.''}

\nomenclature[D, 01]{$F_{l.b.}$}{The lower bound value for the objective function.}
\nomenclature[D, 020]{$C_{\cdot}$}{Constants used in specifying assumptions about the optimization problem.}
\nomenclature[D, 021]{$\pi_{\cdot}$}{Constants used in specify the assumptions for tail probabilities.}
\nomenclature[D, 03]{IS}{A designation referring to assumptions about the domain of the objective function. The abbreviation stands for ``Iterate Space.''}
\nomenclature[D, 04]{DO}{A designation referring to assumptions about the deterministic objective function. The abbreviation stands for ``Deterministic Objective.''}
\nomenclature[D, 05]{SO}{A designation referring to assumptions about the stochastic objective function. The abbreviation stands for ``Stochastic Objective.''}
\nomenclature[D, 06]{DG}{A designation referring to assumptions about the deterministic gradient function. The abbreviation stands for ``Deterministic Gradient.''}
\nomenclature[D, 07]{SG}{A designation referring to assumptions about the stochastic gradient function. The abbreviation stands for ``Stochastic Gradient.''}
\nomenclature[D, 08]{NM}{A designation referring to assumptions about the variability of the stochastic gradient function. The abbreviation refers to ``Noise Model.''}
\nomenclature[D, 09]{T}{A designation referring to assumptions about the tail behavior of the stochastic gradient function. The abbreviation stands for ``Tail.''}
\nomenclature[D, 10]{BCN}{A reference to a set of assumptions that define a class of functions which we refer to as Bottou-Curtis-Nocedal functions. See \S \ref{subsection-bcn-specification}.}

\printnomenclature

\section{Challenges of Rigorous Stopping Criteria} \label{section-rigorous}
Fundamentally, stopping criteria are posing a zero-one question---that is, either some event does not happen (i.e., zero) or the event does happen (i.e., one). In a deterministic setting, useful stopping criteria are typically asked about observable quantities, such as the value of the objective function or the norm of the gradient function. Additionally, in the deterministic setting, rigorous stopping criteria theory requires ensuring that the stopping criterion will be triggered, which, in turn, relies on proving that the underlying optimization procedure generates iterates that converge to a stationary point or, more broadly, that induce a reduction in the objective.

In the stochastic setting, these useful deterministic stopping criteria cannot be directly applied as the quantities on which they are based (e.g., the objective function or gradient function) cannot be feasibly evaluated. Thus, in the stochastic setting, such deterministic stopping criteria can be adapted by either estimating the deterministic objective or deterministic gradient function (e.g., \ref{stop-mean-est}), or estimating the zero-one question posed by the deterministic stopping criteria (e.g., \ref{stop-vote-ind}). 

Unfortunately, these estimated stopping criteria inherently require a more complex analysis. Specifically, these estimated stopping criteria require
\begin{enumerate}
\item establishing that the underlying deterministic stopping criterion will be triggered;
\item establishing that the estimated stopping criterion will be triggered;
\item establishing that, if the estimated stopping criterion is triggered, whether the underlying deterministic stopping criterion would be triggered if the quantities involved were computable (i.e., false positive control); and
\item establishing that, if the underlying deterministic stopping criterion would be triggered, whether the estimated stopping criterion is triggered (i.e., false negative control).
\end{enumerate}
We now explore each of these questions below, and we will focus our discussion around the common deterministic stopping criterion that is triggered when the deterministic gradient is below a given threshold. Moreover, when appropriate, we mention how we contribute to the broader literature through our analysis.

\subsection{Triggering the Underlying Deterministic Stopping Criterion}

The usual, deterministic gradient-based stopping criterion requires evaluating whether the norm of the gradient is less than some threshold. In the deterministic context, such a stopping criterion is guaranteed to be triggered if we can prove that the gradient function evaluated at the iterates is converging to zero. In the stochastic setting, any estimated stopping criterion for this underlying deterministic criterion requires establishing the same result, but with the added consideration of several distinct modes of convergence (e.g., strong convergence, converge in probability, converge in distribution). As we now argue, in the stochastic setting, estimated stopping criteria require the high standard of strong convergence (i.e., almost sure convergence, with probability one convergence).

To illustrate why we need strong convergence, consider stopping a sequence of $\lbrace 0, 1 \rbrace$-valued, independent random variables when we observe a sequence of, say, twenty, consecutive zeros with (possibly) random evaluation points, $\lbrace T_j \rbrace \subset \mathbb{N}$. 
Specifically, let $\lbrace X_k \rbrace$ be independent random variables such that $\mathbb P [ X_k = 0 ] = 1 - k^{-\alpha}$ for some $\alpha \in (0,1]$.
Then, $\lbrace X_k \rbrace$ is converging to zero in probability, yet, by the second Borel-Cantelli lemma, $\mathbb P [ X_k = 1, ~i.o. ] = 1$, where $i.o.$ means infinitely often. 
As a result, it is entirely possible that a one appears in $\lbrace X_{T_j},X_{T_j+1},\ldots,X_{T_j+19} \rbrace$ for all $j$; that is, because strong convergence does not hold, a poor choice of $\lbrace T_j \rbrace$ would prevent us from ever observing a consecutive sequence of twenty zeros. Thus, stopping criteria are predicated on establishing strong convergence, rendering convergence in probability insufficient.

Unfortunately, demonstrating strong convergence has been achieved with varying success. When $F$ is convex, strong convergence can be readily established along with other convergence rate results \cite{chung1954,bertsekas2011,bottou2018}. Curiously, even when $F$ is convex, strong convergence has not translated to useful stopping criteria with the exception of the limited stopping criteria demonstrated for quadratic problems (e.g., \cite{mirozahmedov1983,pflug1988,wada2010,patel2016,chee2018}).

When $F$ is nonconvex, convergence in probability is already sufficiently challenging, even under the myriad of notions of nonconvexity that we catalog in \S \ref{section-structure}. In fact, consider a popular, general class of nonconvex functions for which (1) $F(\theta)$ is bounded from below; (2) $\dot f(\theta,X)$ are unbiased stochastic gradients; (3) $\dot{F}(\theta)$, the deterministic gradient, is globally Lipschitz continuous; and (4) $\mathbb{E}[\Vert \dot f (\theta,X) \Vert_2^2 ]$, the noise model, is controlled by a constant summed with a scaling of the norm of the deterministic gradient squared. For such nonconvex functions---which we refer to as Bottou-Curtis-Nocedal (\ref{bcn}) functions, as popularized by \cite{bottou2018} and are formally specified in \S \ref{subsection-bcn-specification}---even convergence in probability has yet to be established. Despite this, there are results under more stringent conditions: 
\begin{enumerate}
\item If a BCN function is also twice differentiable and its Hessian-Gradient product is Lipschitz continuous, then the convergence of the deterministic gradient (evaluated at the iterates of the SGD method) to zero in probability has been demonstrated (see Corollary 4.12 of \cite{bottou2018}).
\item If $f \geq 0$ with probability one and $\dot{f}$ is \textit{globally and uniformly} Lipschitz continuous in $\theta$, then the convergence of the deterministic gradient to zero in probability has also been demonstrated (see Theorem 2(c) of \cite{lei2019}).
\item If a BCN function also requires $
\mathbb{P}[\forall \theta,~ \Vert \dot f(\theta,X) - \dot{F}(\theta)\Vert_2 \leq C] = 1$,
and $F$ to be Lipschitz continuous,
then the strong convergence of the deterministic gradient to zero has been demonstrated (see Theorem 1 of \cite{liorab2018}).
\end{enumerate}
This ultimate result is a rather important contribution, but the additional conditions exclude the simple linear regression problem, which is included in BCN functions. In summary, strong convergence for general, BCN nonconvex functions has yet to be established.

In order to address this gap, our first contribution is to prove the strong convergence (i.e., convergence with probability one) of $\dot{F}$ evaluated at the iterates of SGD to zero for \ref{bcn} functions (see Corollary \ref{corollary-convergence}), which generalizes and strengthens the preceding results. Our proof employs several strategies that either refine current techniques or are atypical in the stochastic optimization literature; our general strategy and how it is distinct from \cite{lei2019} and \cite{liorab2018} is discussed at the beginning of \S \ref{subsection-strong-convergence}. Owing to this contribution, we can guarantee that the underlying deterministic stopping criterion will be triggered.

\subsection{Triggering the Estimated Stopping Criterion}

Unfortunately, even if the underlying deterministic stopping criterion is triggered, the estimated stopping criterion is not necessarily subject to such a guarantee. To illustrate, consider applying the independent-vote stopping criterion (see \ref{stop-vote-ind} below)---that is, stop if
\begin{equation}
\frac{1}{N} \sum_{i=1}^N \1{ | \dot f (\theta,X_i)| \leq \epsilon } \geq \gamma,
\end{equation}
where $\dot{f}(\theta,X_i)$ is the derivative of $f$ with respect to $\theta$; $X,X_i$ are independent and identically distributed; $\gamma,\epsilon \in (0,1)$---to $f(\theta,X) = \theta X$ where $\theta \in \mathbb{R}$ and $X$ is a Rademacher random variable. Then, $\dot{F}(\theta) = 0$ for all $\theta$, yet the stopping criterion will never be triggered since $|\dot{f}(\theta,X_i)| = 1$. 

As this example illustrates, the vote-based stopping criterion faces a challenge because the stochastic gradient's distribution is concentrated away from its mean. If such a noise condition on the stochastic gradients is endogenous to the optimization problem, the vote-based stopping criterion would be a poor choice and an alternative would be more appropriate. On the other hand, if such a noise condition is not present in the optimization problem, the vote-based stopping criterion would offer certain computational and information-theoretic benefits that we discuss further below.

To summarize, whether an estimated stopping criterion is triggered is \textit{not} subsumed by whether the underlying deterministic stopping criterion is triggered, and the choice of estimated stopping criterion will have implications for performance even for the same underlying deterministic stopping criterion. Thus, studying whether the estimated stopping criterion is triggered adds a layer of complexity that does not exist for the deterministic context. To our knowledge, such an investigation has not been undertaken in the literature, and will be a central issue that we will explore.

\subsection{False Negative and False Positive Control}
Owing to the fact that estimated stopping criteria depend on the values of random quantities, an estimated stopping criterion may incorrectly 
\begin{enumerate}
\item fail to stop the algorithm even when the deterministic gradient is sufficiently small;\footnote{The notions of ``sufficiently small'' or ``too large'' are dependent on the application, just as they are in deterministic optimization.} or
\item incorrectly stop the algorithm even though the norm of the iterate's deterministic gradient is too large. 
\end{enumerate}
An estimated stopping criterion that fails in the former respect experiences a false negative; that is, the estimated stopping criterion is not triggered even though the underlying deterministic stopping criterion would have been triggered. An estimated stopping criterion that fails in the second respect experiences a false positive; that is, the estimated criterion is triggered even though the underlying deterministic criterion would not have been triggered.
Ideally, a stopping criterion is specified in order to control the probability of both types of falsehoods from occurring.

To our knowledge, there are no stopping criteria that have been specified that have rigorously addressed either of these concerns. Therefore, in this work, we will derive false negative probability controls. While our techniques can be applied to derive coarse false positive probability controls, we will not state formal results in this direction: in our view, a false positive is often deleterious as it would terminate the algorithm too early for any utility to be derived from the terminal iterate; thus, coarse false positive controls are rather useless for most practical situations (and, for those where it is allowable, they can be readily derived using our techniques). For meaningful false positive control, we would need assumptions on the cumulative distribution function of $\Vert\dot f(\theta,X) \Vert_2$, which is an atypical consideration in the stochastic optimization literature. Thus, we will leave it to future work.

\section{Bottou-Curtis-Nocedal and Other Conditions on Nonconvexity} \label{section-structure}
For the specification of nonconvex functions, there are many conditions in the literature and many of them are intimately related. Our goal here is to define these conditions as they appear in the literature, and demonstrate how they are related. At the end, we state the Bottou-Curtis-Nocedal (BCN) conditions, which was popularized in \cite{bottou2018} and which is the main set of conditions under consideration in this work.

Before enumerating these distinct conditions, we note that there is a core assumption that $F$ is bounded from below, both $\dot{F}$ and $\dot{f}$ exist, and that $\dot{f}$ is unbiased. While it is possible to allow for bias in $\dot{f}$ (see \S 4 of \cite{bottou2018}), this is only useful when we are using a scalar step size; if we consider a matrix-based step size, as we will do here, then the bias in $\dot{f}$ cannot be allowed without extra assumptions. Hence, we will take $\dot{f}$ to be unbiased and we simply point out that the biased case with scalar step sizes is an easier analysis than what we will present below.

\subsection{Control on the Iterate Space}

There are three common ways in which the space of iterates of $\theta$ is controlled. The first condition assumes

\condition{condition-is-compact}{The argument, $\theta$, is restricted to a compact, convex subset of $\mathbb{R}^p$, denoted by $\Theta$ \cite{lei2018}.}{IS-1}

Condition \ref{condition-is-compact} is quite convenient in the analysis of SGD methods because it prevents the iterates from diverging out of a compact set---if an iterate is pushed outside of the convex, compact set then it is simply projected back into the set. As a result of this projection operation, the analysis of SGD methods where the iterates are restricted to a convex, compact is a special case of when the situation where $\Theta$ is not required to be bounded \cite{bertsekas2011}. 

A less restrictive approach assumes
\condition{condition-is-closed}{The argument, $\theta$, is restricted to a closed, convex subset of $\mathbb{R}^p$, denoted by $\Theta$ \cite{ghadimi2016,wang2019}.}{IS-2}

Condition \ref{condition-is-closed} certainly contains Condition \ref{condition-is-compact}, and thus is more general. However, just as for Condition \ref{condition-is-compact}, Condition \ref{condition-is-closed} is imposed by projecting the iterates into $\Theta$. Again, as a result of this projection operation, the analysis of SGD methods where the iterates are restricted to a convex, closed set is a special case of the analysis in which $\Theta$ is the whole space $\mathbb{R}^p$.

Thus, the final way in which the space of iterates of $\theta$ is controlled is to allow $\theta$ to be unrestricted. For uniformity, we define this as
\condition{condition-is-unrestricted}{The argument, $\theta$, is unrestricted. That is $\theta$ can take any value in $\mathbb{R}^p$.}{IS-3}

\subsection{Control on the Objective}

There are several common ways in which the stochastic objective (SO) function, $f$, is directly controlled. The first one assumes
\condition{condition-so-bounded}{There exists a $C > 0$ such that
$$\Prb{ \forall \theta, ~ |f(\theta,X)| < C  } = 1.$$}{SO-1}

Condition \ref{condition-so-bounded} is assumed in \cite{hu2017,zhou2018}. In fact, both of these works require more stringent controls on $f$ and its derivatives. However, in \cite{hu2017}, these added controls allow for the approximation of SGD methods by stochastic differential equations, which are then leveraged to characterize the escape times of an SGD method from a saddle point. It is worth noting that such a characterization supplies a more complete analysis than the more recent results of \cite{fang2019,jin2019}, which only supply conditions for finding an $\epsilon$-approximate stationary point with a nearly positive definite Hessian, but do not guarantee that such a point is stable (i.e., that the SGD method will not escape this point as well).

A natural relaxation of Condition \ref{condition-so-bounded} is to assume that $F(\theta)$ is bounded from above and below. While bounding $F(\theta)$ from below is necessary when $\theta$ is not restricted to a compact subset, there were no works in our review that directly assumed that $F(\theta)$ is bounded from above. The closest condition in this regard come from \cite{fehrman2019}, which assumes
\condition{condition-so-2ndmoment}{There exists a $C > 0$ such that for all $\theta \in \mathbb{R}^p$,
$$\E{ |f(\theta,X)|^2 } \leq C.$$}{SO-2}

Note, by Jensen's inequality, Condition \ref{condition-so-2ndmoment} implies
\begin{equation}
|F(\theta)|^2 = | \E{ f(\theta,X) } |^2 \leq \E{ |f(\theta,X)|^2 } \leq C;
\end{equation}
that is, Condition \ref{condition-so-2ndmoment} implies $F$ is bounded. 

An alternative restriction to bounding the stochastic objective is to impose Lipschitz continuity, which is often more appropriate for a number of data-driven optimization problems. One specific form of using such continuity is assumed in \cite{reddi2016a,park2019}, and specified by 
\condition{condition-so-lipschitz}{There exists a $C > 0$ such that
$$\Prb{ \forall \theta_1,\theta_2,~ |f(\theta_1,X) - f(\theta_2,X)| \leq C \norm{\theta_1 - \theta_2}_2 } = 1.$$}{SO-3}

Unfortunately, Condition \ref{condition-so-lipschitz} excludes our litmus problem (i.e., the standard linear regression problem). Interestingly, Condition \ref{condition-so-lipschitz} is assumed locally in \cite{fehrman2019} along with twice differentiability and constraints on the expected Hessian of $F$ in order to establish local rates of convergence for nonconvex functions. 

The natural relaxations of Conditions \ref{condition-so-bounded} and \ref{condition-so-lipschitz} to the deterministic objective (DO) function, $F$, are
\condition{condition-do-bounded}{There exists a $C > 0$ such that $|F(\theta)| \leq C$ for all $\theta$.}{DO-1}
and
\condition{condition-do-lipschitz}{There exists a $C > 0$ such that for all $\theta_1$ and $\theta_2$, 
$$|F(\theta_1) - F(\theta_2)| \leq C \norm{\theta_1 - \theta_2}_2.$$}{DO-2}

By Jensen's inequality, it follows that Condition \ref{condition-so-bounded} implies Condition \ref{condition-do-bounded}, and that Condition \ref{condition-so-lipschitz} implies Condition \ref{condition-do-lipschitz}. Note, Condition \ref{condition-do-bounded} is used in \cite{hu2017} to establish an SDE approximation to the SGD iterates, and establish rather fine-resolutions results about the escape times of the SGD from saddle points. Condition \ref{condition-do-lipschitz} was used in the early nonconvex results of \cite{reddi2016a,ghadimi2016}, and was used recently by \cite{park2019} to establish that SGD finds $\epsilon$-approximate second-order stationary points with high probability. Importantly, Condition \ref{condition-do-lipschitz} is essential in Theorem 1 of \cite{liorab2018} (i.e., the aforementioned convergence with probability one result): it is used three times in the proof of the result and done in such a way that it cannot be relaxed. 

\subsection{Control on the Gradient}

There are many more conditions that are placed on the stochastic gradients (SG), $\dot{f}$, than the stochastic objective (SO), $f$, owing to the centrality of $\dot{f}$ in SGD methods. The first set of conditions will be analogous to those conditions on $f$. The most restrictive condition, integral to the results in \cite{reddi2016a,ma2017,hu2017,bi2019}, is 
\condition{condition-sg-bounded}{There exists a $C > 0$ such that
$$\Prb{ \forall \theta, ~\norm{ \dot f(\theta,X) }_2 \leq C  } = 1.$$}{SG-1}

A more applicable relative of this condition and an analogue of Condition \ref{condition-so-lipschitz}, used in \cite{reddi2016b,ma2017,huang2017,zhou2018,li2018,bassily2018,fang2019,jin2019,bi2019,park2019,wang2019}, is
\condition{condition-sg-lipschitz}{There exists a $C > 0$ such that
$$\Prb{ \forall \theta_1,\theta_2,~ \norm{ \dot f(\theta_1,X) - \dot f(\theta_2,X)}_2 \leq C \norm{ \theta_1-\theta_2}_2 } = 1.$$}{SG-2}
Condition \ref{condition-sg-lipschitz} is also used locally in the results of \cite{fehrman2019}. More importantly, Condition \ref{condition-sg-lipschitz} is integral to the previously mentioned result, Theorem 2(c) of \cite{lei2019}, which established convergence in probability. While we will not directly compare Condition \ref{condition-sg-lipschitz} to the BCN conditions, we will compare weaker conditions implied by Condition \ref{condition-sg-lipschitz} to the BCN conditions. In order to do so, we will first need to discuss common conditions placed on the noise model (NM).

The first common restriction on the noise model (NM), used in \cite{chen2018,liorab2018}, is specified by
\condition{condition-noise-bounded}{There exists a $C \geq 0$ such that
$$ \Prb{ \forall \theta,~ \norm{ \dot{f}(\theta,X) - \dot {F}(\theta)}_2 \leq C } = 1.$$}{NM-1}

The second and less restrictive noise model condition, used in \cite{ghadimi2016,li2018,fang2019,jin2019,yu2019,park2019,lei2018}, is specified by
\condition{condition-noise-var}{There exists a $C \geq 0$ such that, $\forall \theta$,
$$ \E{ \norm{ \dot{f}(\theta,X) - \dot{F}(\theta)}_2^2} \leq C.$$ }{NM-2}

As we will see subsequently, we have the following noise model condition, which is motivated by Condition \ref{condition-sg-lipschitz}.
\condition{condition-noise-obj}{There exists a $C_1,C_2 \geq 0$ such that, $\forall \theta$,
$$ \E{ \norm{ \dot{f}(\theta,X) - \dot{F}(\theta)}_2^2} \leq C_1 + C_2 F(\theta) .$$}{NM-3}

The following lemma relates Condition \ref{condition-sg-lipschitz} to Condition \ref{condition-noise-obj}. Note, in the following lemma, the parameter, $L$, is taken to be $0$ in \cite{lei2019}.
\begin{lemma} \label{sg-lipschitz-implies-bcn}
Suppose there exists $L \in \mathbb{R}$ such that $\Prb{\forall \theta, ~ f(\theta,X) \geq L} = 1$ and suppose Condition \ref{condition-sg-lipschitz} holds. Then, for all $\theta_1,\theta_2$,
\begin{equation}
\norm{ \dot{F}(\theta_1) - \dot{F}(\theta_2)}_2 \leq C \norm{\theta_1 - \theta_2}_2,
\end{equation}
and for some $C_1,C_2 \geq 0$, \ref{condition-noise-obj} holds.
\end{lemma}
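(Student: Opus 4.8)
The plan is to prove the two claims separately, in each case passing from the almost-sure, realization-wise regularity of $\dot{f}(\cdot,X)$ granted by Condition \ref{condition-sg-lipschitz} to a statement about the expectation $F$ and its gradient $\dot{F}$. Throughout I would use the standing core assumption that $\dot{f}$ is unbiased, i.e. $\dot{F}(\theta) = \E{\dot{f}(\theta,X)}$.

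For the Lipschitz continuity of $\dot{F}$, I would write $\dot{F}(\theta_1) - \dot{F}(\theta_2) = \E{\dot{f}(\theta_1,X) - \dot{f}(\theta_2,X)}$, take norms, and move the norm inside the expectation by Jensen's inequality (the Euclidean norm being convex). The almost-sure bound in Condition \ref{condition-sg-lipschitz} then gives $\norm{\dot{F}(\theta_1) - \dot{F}(\theta_2)}_2 \leq \E{\norm{\dot{f}(\theta_1,X) - \dot{f}(\theta_2,X)}_2} \leq C\norm{\theta_1 - \theta_2}_2$, which is the first assertion.

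For the noise-model bound, the first observation is that the variance is dominated by the second moment, so it suffices to bound $\E{\norm{\dot{f}(\theta,X)}_2^2}$ by an expression of the form $C_1 + C_2 F(\theta)$; indeed $\E{\norm{\dot{f}(\theta,X) - \dot{F}(\theta)}_2^2} \leq \E{\norm{\dot{f}(\theta,X)}_2^2}$, since subtracting the mean only decreases the second moment. The crucial step is a realization-wise gradient bound: Condition \ref{condition-sg-lipschitz} says that, on a probability-one event, $\theta \mapsto f(\theta,X)$ has a $C$-Lipschitz gradient, and hence satisfies the descent inequality $f(\theta',X) \leq f(\theta,X) + \langle \dot{f}(\theta,X), \theta' - \theta\rangle + \frac{C}{2}\norm{\theta' - \theta}_2^2$. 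Minimizing the right-hand side over $\theta'$ (taking $\theta' = \theta - C^{-1}\dot{f}(\theta,X)$) and using the lower bound $f(\theta',X) \geq L$ yields $\norm{\dot{f}(\theta,X)}_2^2 \leq 2C(f(\theta,X) - L)$ almost surely, for every $\theta$.

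Finally, I would take expectations of this almost-sure inequality. Since $\E{f(\theta,X)} = F(\theta)$ and $F(\theta) - L \geq 0$ (because $f(\theta,X) \geq L$ a.s.), this gives $\E{\norm{\dot{f}(\theta,X)}_2^2} \leq 2C(F(\theta) - L) \leq 2C F(\theta) + 2C|L|$. Setting $C_2 = 2C \geq 0$ and $C_1 = 2C|L| \geq 0$ then yields $\E{\norm{\dot{f}(\theta,X) - \dot{F}(\theta)}_2^2} \leq C_1 + C_2 F(\theta)$, which is Condition \ref{condition-noise-obj}. I expect the main obstacle to be purely the descent-lemma step: it must be invoked realization-wise on the full-measure event where $\dot{f}(\cdot,X)$ is $C$-Lipschitz, and one should confirm that the interchange of expectation and differentiation underlying $\dot{F} = \E{\dot{f}}$ (the standing unbiasedness hypothesis) legitimately applies; the remaining manipulations are routine.
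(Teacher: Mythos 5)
Your proposal is correct and follows essentially the same route as the paper: Jensen's inequality for the Lipschitz continuity of $\dot{F}$, then the realization-wise descent lemma with the test point $\theta - C^{-1}\dot{f}(\theta,X)$ and the lower bound $L$ to get $\norm{\dot{f}(\theta,X)}_2^2 \leq 2C(f(\theta,X)-L)$, followed by taking expectations. The only (cosmetic) difference is that you bound the variance by the second moment directly, whereas the paper uses $\norm{a+b}_2^2 \leq 2\norm{a}_2^2 + 2\norm{b}_2^2$ and so ends up with the slightly larger constants $4C$ in place of your $2C$.
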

\begin{proof}
The first result follows by an application of Jensen's inequality. For the second result, for any $\theta$, let
\begin{equation}
\tilde \theta = \theta - \frac{1}{C} \dot f(\theta,X).
\end{equation}
By Taylor's theorem and Condition \ref{condition-sg-lipschitz},
\begin{align}
f(\tilde{\theta},X) &\leq f(\theta,X) + \dot{f}(\theta,X)'(\tilde{\theta} - \theta) + \frac{C}{2} \norm{ \tilde{\theta} - \theta}_2^2 \\
	&\leq f(\theta,X) - \frac{1}{C} \norm{ \dot f(\theta,X) }_2^2 + \frac{1}{2C} \norm{ \dot f(\theta,X) }_2^2 \\
	&\leq f(\theta,X) - \frac{1}{C} \norm{ \dot f(\theta,X)}_2^2.
\end{align}
Therefore, since $L \leq f(\tilde \theta, X)$ with probability one,
\begin{equation}
\norm{ \dot{f}(\theta,X)}_2^2 \leq C[f(\theta,X) - f(\tilde{\theta},X)] \leq C[f(\theta,X) - L].
\end{equation}
Moreover, this inequality with Jensen's inequality implies that
\begin{equation}
\norm{ \dot{F}(\theta)}_2^2 \leq \E{ \norm{ \dot{f}(\theta,X)}_2^2} \leq C [F(\theta) - L]
\end{equation}
Therefore,
\begin{equation}
\E{ \norm{ \dot{f}(\theta,X) - \dot F(\theta)}_2^2} \leq 2 \E{ \norm{ \dot{f}(\theta,X)}_2^2} + 2 \norm{ \dot{F}(\theta)}_2^2 \leq 4 C (F(\theta)- L).
\end{equation}
If $L$ is positive, then we can choose $C_1 = 0$ and $C_2 = 4C$. If $L$ is negative, then we can choose $C_1 = -4C L$ and $C_2 = 4C$. \qed
\end{proof}

While a variant of Condition \ref{condition-noise-obj} was recently considered (see the discussion after \ref{condition-noise-grad}), a direct application of \ref{condition-noise-obj} and the Polyak-{\L}ojasiewicz (PL) condition \cite{karimi2016} provide a straightforward proof of convergence (in probability) of the objective function evaluated at the iterates to the optimal value.

The final noise model condition is assumed in the BCN conditions \cite{bottou2018}, and is specified by
\condition{condition-noise-grad}{There exists a $C_1,C_2 \geq 0$ such that, $\forall \theta \geq 0$,
$$ \E{ \norm{ \dot{f}(\theta,X) - \dot{F}(\theta)}_2^2} \leq C_1 + C_2 \norm{ \dot{F}(\theta)}_2^2.$$}{NM-4}

In general, if $\dot{F}$ is Lipschitz continuous, then \ref{condition-noise-grad} implies \ref{condition-noise-obj} by a simple analogue of Lemma \ref{sg-lipschitz-implies-bcn}. Given that $\dot{F}$ is often taken to be Lipschitz continuous in the stochastic optimization literature, \ref{condition-noise-obj} is effectively more general. Some recent works combining the two have also appeared under the names of expected smoothness \cite{khaled2020} and \cite{gower2020}, and show weak forms of convergence under this broader assumption. In fact, to strengthen their convergence results, the PL condition is employed \cite{khaled2020,gower2020}, which renders \ref{condition-noise-obj} and \ref{condition-noise-grad} equivalent. Specifically, if $\exists \mu > 0$ and an optimal objective function value $F^*$ such that
\begin{equation}
\norm{\dot{F}(\theta)}_2^2 \geq \mu [ F(\theta) - F^* ],
\end{equation}
then
\begin{equation}
\begin{aligned}
C_1 + C_2 F(\theta) &= (C_1 + C_2 F^*) + C_2 [ F(\theta) - F^*] \\
				   &\leq (C_1 + C_2 F^*) + \frac{C_2}{\mu} \norm{\dot{F}(\theta)}_2^2.
\end{aligned}
\end{equation}
We note that the PL condition, if assumed globally as in \cite{khaled2020,gower2020}, is rather a strong assumption to make as it all stationary points to have value $F^*$, which is usually taken to be the global minimum.

The final two conditions are the less restrictive implications that follow from Conditions \ref{condition-sg-bounded} and \ref{condition-sg-lipschitz} about the deterministic gradient (DG), $\dot{F}$. The first condition, found in \cite{huang2017,foster2018,chen2018,ward2018}, is
\condition{condition-dg-bounded}{There exists a $C > 0$ such that, for all $\theta$,
$$ \norm{ \dot {F}(\theta)}_2 \leq C.$$}{DG-1}
The second condition, found in \cite{liorab2018,zou2019,yu2019,lei2018}, is
\condition{condition-dg-lipschitz}{There exista a $C > 0$ such that for all $\theta_1,\theta_2$, 
$$ \norm{ \dot{F}(\theta_1) - \dot{F}(\theta_2)}_2 \leq C \norm{\theta_1 - \theta_2}_2.$$}{DG-2}

\subsection{Bottou-Curtis-Nocedal Functions} \label{subsection-bcn-specification}

The Bottou-Curtis-Nocedal (BCN) conditions, as popularized in \cite{bottou2018}, is a very general set of conditions as it---arguably---takes the least restrictive conditions from those discussed above. That is, the BCN conditions are defined as follows.

\condition{bcn}{The BCN conditions are

\begin{enumerate}
\item $F$ is bounded from below.
\item $\dot{F}$ is Lipschitz continuous (\ref{condition-dg-lipschitz}).
\item For all $\theta \in \mathbb{R}^p$, $\dot{F}(\theta) = \mathbb{E}[ \dot{f}(\theta,X)]$.
\item The stochastic gradients satisfy \ref{condition-noise-grad}.
\end{enumerate}}{BCN}

\begin{definition}
A function that satisfies the BCN conditions is referred to as a BCN function.
\end{definition}

\begin{remark} While we have not explicitly done so, we can allow for biased stochastic gradients if we use scalar step sizes. \end{remark} 

\section{Strong Global Convergence} \label{section-stability-convergence}
Here, we establish one of our key results: the deterministic gradient evaluated at the iterates generated by stochastic gradient descent (SGD) converge to zero with probability one for Bottou-Curtis-Nocedal (\ref{bcn}) nonconvex functions. In order to do this, we will first need to specify the precise nature of the SGD iterates that we will consider. Then, we will establish global convergence of SGD with probability one for BCN functions, which includes a broad class of convex and nonconvex functions. 
\subsection{Stochastic Gradient Descent Method} \label{subsection-sgd}

Here, we will consider SGD methods with matrix-valued learning rates. In doing so, we will require the particular \ref{bcn} conditions described earlier; however, we emphasize that if the step size is scalar and the stochastic gradients are allowed to be bias, then the iterates can be analyzed with the same arguments below and with less difficulty. 

Let $\lbrace X_k : k \in \mathbb{N} \rbrace$ be independent and identically distributed random variables that have the same distribution as $X$. Let $\beta_0$ be either a fixed or random quantity in $\mathbb{R}^p$. Let $\mathcal{F}_0 = \sigma(\beta_0)$ and $\mathcal{F}_k = \sigma(\beta_0, X_1,\ldots,X_k)$ denote the corresponding elements of the usual filtration. Define the Stochastic Gradient Descent iterates $\lbrace \beta_k : k \in \mathbb{N} \rbrace$ recursively by
\begin{equation}
\beta_{k+1} = \beta_k - M_k \dot f (\beta_k, X_{k+1}),
\end{equation} 
where $\lbrace M_k \rbrace$ are matrices that satisfy:
\condition{prop-sympd}{The matrices $\lbrace M_k \rbrace$ are symmetric and positive definite.}{P1}
\condition{prop-maxeig}{There exists an $S > 0$ such that
$$\sum_{k=0}^\infty \lambda_{\max}(M_k)^2 < S,$$
where $\lambda_{\max}(\cdot)$ denotes the largest eigenvalue of the given matrix.}{P2}
\condition{prop-mineig}{The sum,
$$\sum_{k=0}^\infty \lambda_{\min}(M_k),$$
diverges, where $\lambda_{\min}(\cdot)$ is the smallest eigenvalue of the given matrix.}{P3}
\condition{prop-condnum}{The condition numbers of $\lbrace M_k \rbrace$ with respect to operator 2-norm, $\lbrace \kappa(M_k) \rbrace$, satisfy
$$ \lim_{k \to \infty} \lambda_{\max}(M_k) \kappa(M_k) = 0.$$
}{P4}
There are several remarks worth making at this point. First, if $M_k$ are scalar multiples of the identity, we would recognize properties \ref{prop-sympd} to \ref{prop-mineig} as the Robbins-Monro conditions \cite{robbins1951}, and \ref{prop-condnum} would be implied by \ref{prop-maxeig}. Second, \ref{prop-condnum} ensures that the condition number of $M_k$ cannot grow too rapidly such that subsequences of $\lbrace M_k \rbrace$ may hinder convergence (see Lemma \ref{lemma-minimality-smallest-eigenvalue}). Importantly, \ref{prop-condnum} can be relaxed so that the limit is a positive number that would depend on certain constants governing the deterministic gradient behavior and the noise model (see \eqref{eqn-relaxed-condnum-control}). 

Finally, we make a brief note about an important property of SGD that follows from the independent of $\lbrace X_k \rbrace$, which we will formalize and make use of later. Specifically, $\lbrace \beta_k \rbrace$ enjoy an analogue of the strong Markov property; that is, for any finite, stopping time $\tau$, the iterates $\lbrace \beta_{\tau+k} \rbrace$ are independent of $\mathcal{F}_{\tau}$ given $\beta_{\tau}$ and $M_\tau$. Moreover, since properties \ref{prop-sympd} to \ref{prop-condnum} still hold for $\lbrace M_{\tau+k} \rbrace$ (see Lemma \ref{lemma-pk-satisfy-mk-properties}), any property that holds for $\lbrace \beta_{\tau+k} \rbrace$ (given $\beta_{\tau}$ and $M_\tau$) must also hold for $\lbrace \beta_k \rbrace$. 
\subsection{Strong Global Convergence} \label{subsection-strong-convergence}

With the formulation of SGD in hand and the nature of the BCN functions specified, we are now ready to prove the strong global convergence of the iterates; that is, we will prove that
\begin{equation}
\Prb{ \lim_{k \to \infty} \norm{ \dot{F}(\beta_k)}_2 = 0 } = 1.
\end{equation}

The proof of this result will proceed by two steps. First, we will establish that, for any $\delta > 0$,
\begin{equation} \label{eqn-revisit-io}
\Prb{ \norm{ \dot{F}(\beta_k)}_2 \leq \delta, ~i.o.} = 1,
\end{equation}
where $i.o.$ means infinitely often (see Theorem \ref{theorem-revisit}). Unfortunately, this alone will not imply convergence. Therefore, we will use this result to prove that, for any $\delta > 0$, 
\begin{equation} \label{eqn-exit-fo}
\Prb{ \norm{ \dot{F}(\beta_k)}_2 > \delta, ~i.o.} = 0,
\end{equation}
which implies that the deterministic gradient of the iterates converge to zero with probability one (see Theorem \ref{theorem-exit}).

Before detailing the results, we would like to point out the general strategy that we use, and how it is similar to, or distinct from, previous efforts.
First, we use coupling to relate the iterate sequence and a related sequence, for which we then establish an analogue of the strong Markov property mentioned previously. With these two pieces, a refinement of Zoutendijk's global convergence strategy \cite{zoutendijk1970}, and an induction argument, we are able to prove (\ref{eqn-revisit-io}). Then, we leverage (\ref{eqn-revisit-io}), the inclusion-exclusion principle, and a conditional version of the Borel-Cantelli lemma to conclude (\ref{eqn-exit-fo}). 

As mentioned, our approach adapts Zoutendijk's global convergence strategy \cite{zoutendijk1970}, which has been done previously in the stochastic optimization literature \cite{reddi2016a,reddi2016b}. However, our approach refines this argument by establishing an analogue of the strong Markov property through coupling, which we have not observed in any of the stochastic optimization literature. This allows us to state a much stronger result than what has previously been established. 

Another point of departure is that our approach avoids restating the behavior of the iterates as a martingale, which is the primary strategy when $F$ is convex (e.g., see \cite{bertsekas2011}). Our approach also avoids restating any evaluation of the iterates with respect to the objective or the gradient as a martingale, which is the strategy that is used in Theorem 2(c) of \cite{lei2019}, and which seems to require much stronger conditions than the general BCN conditions. In fact, the use of martingales only appears in the proof of the conditional Borel-Cantelli lemma, which we do not derive but rather cite from source material. 

Finally, our proof does not require explicitly bounding the behavior of the differences between the norms of the deterministic gradients of sequential iterates by the steps, $\lbrace M_k \rbrace$, to prove the deterministic gradients of the iterates converge to zero with probability one. This type of argumentation is essential to the proof of \cite{liorab2018}\footnote{This bound is required to apply Lemma 1 of \cite{liorab2018}. See the second display equations of Page 6 in \cite{liorab2018}.} and in \cite{lei2019}.\footnote{This type of bound is established in (15) and the subsequent display equation in \cite{lei2019}. The argument then essentially reestablishes Lemma 1 of \cite{liorab2018}.}

\begin{remark}
We also note that all of the inequalities and equalities below hold with probability one, even if this is not explicitly stated.
\end{remark}
\begin{remark}
We also point out that the probabilities and expectations below should be conditional on $\mathcal{F}_0$. However, to avoid the additional cumbersome notation, we will not explicitly state this.
\end{remark}

\subsubsection{Strong Markov Property and Coupling}

Our first task will be to set the stage to the analogue of strong Markov property that will be relevant in analyzing the SGD method. Let $\tau$ be a finite stopping time with respect to $\lbrace \mathcal{F}_k \rbrace$; that is, $\Prb{ \tau < \infty} = 1$. Moreover, as all of our arguments will be asymptotic in this section, we will assume also that $\Prb{ \tau \geq K} = 1$ where $K \in \mathbb{N}$ is defined in the following lemma to satisfy the following property, which will be used later. 

\begin{lemma} \label{lemma-minimality-smallest-eigenvalue}
Suppose $\lbrace M_k \rbrace$ satisfy \ref{prop-sympd} to \ref{prop-condnum}, and recall that $C$ is the Lipschitz constant in \ref{condition-dg-lipschitz} and $C_2$ is the scaling constant in \ref{condition-noise-grad}. There exists a $K \in \mathbb{N}$ such that, for all $k \geq K$ and for any eigenvalue, $\lambda$ of $M_k$,
\begin{equation} \label{eqn-grad-norm-lb}
\frac{1}{2} \lambda_{\min}(M_k) \leq \lambda - \frac{C}{2}\lambda^2 - \frac{C C_2}{2} \lambda_{\max}(M_k)^2.
\end{equation}
\end{lemma}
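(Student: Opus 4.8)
The plan is to reduce the claimed inequality, which must hold for every eigenvalue $\lambda \in [\lambda_{\min}(M_k), \lambda_{\max}(M_k)]$, to a check at the two endpoints of this interval, and then to use the decay of $\lambda_{\max}(M_k)$ together with \ref{prop-condnum} to verify each endpoint for all sufficiently large $k$. Write $a_k = \lambda_{\max}(M_k)$ and $b_k = \lambda_{\min}(M_k)$, so that by \ref{prop-sympd} every eigenvalue $\lambda$ of $M_k$ satisfies $b_k \leq \lambda \leq a_k$. Define $\phi_k(\lambda) = \lambda - \frac{C}{2}\lambda^2 - \frac{1}{2}b_k - \frac{CC_2}{2}a_k^2$; the claim is exactly that $\phi_k(\lambda) \geq 0$ on $[b_k, a_k]$. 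Since $C > 0$, the map $\lambda \mapsto \phi_k(\lambda)$ is concave, so its minimum over the interval is attained at one of the endpoints; hence it suffices to establish $\phi_k(b_k) \geq 0$ and $\phi_k(a_k) \geq 0$ for all large $k$.

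First I would record two decay facts. From \ref{prop-maxeig} the series $\sum a_k^2$ converges, so $a_k \to 0$ and therefore $b_k \to 0$ as well (as $0 < b_k \leq a_k$). From \ref{prop-condnum}, using that $\kappa(M_k) = a_k/b_k$ for a symmetric positive definite matrix, we get $a_k^2/b_k = a_k\,\kappa(M_k) \to 0$. These are the only quantitative inputs needed.

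For the endpoint $\lambda = a_k$, the inequality $\phi_k(a_k) \geq 0$ rearranges to $a_k - \frac{1}{2}b_k \geq \frac{C(1+C_2)}{2}a_k^2$; since $a_k - \frac{1}{2}b_k \geq \frac{1}{2}a_k$ (because $b_k \leq a_k$), it is enough that $a_k \leq 1/[C(1+C_2)]$, which holds eventually since $a_k \to 0$. For the endpoint $\lambda = b_k$, the inequality $\phi_k(b_k) \geq 0$ rearranges to $\frac{1}{2}b_k(1 - C b_k) \geq \frac{CC_2}{2}a_k^2$; once $k$ is large enough that $C b_k \leq \frac{1}{2}$, this is implied by $b_k \geq 2CC_2\, a_k^2$, i.e. by $a_k^2/b_k \leq 1/(2CC_2)$, which holds eventually by the second decay fact. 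Choosing $K$ larger than the finitely many thresholds produced above then yields the lemma.

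The main obstacle---and the reason \ref{prop-condnum} is present at all---is the $\lambda = b_k$ endpoint: there the term $\frac{CC_2}{2}a_k^2$, governed by the largest eigenvalue, must be dominated by the smallest eigenvalue $b_k$, which is exactly the ratio $a_k^2/b_k = a_k\,\kappa(M_k)$ that \ref{prop-condnum} forces to zero. If the condition number were allowed to blow up, this endpoint could fail even though $a_k \to 0$, so the interplay between \ref{prop-maxeig} and \ref{prop-condnum} is the crux of the argument; the concavity reduction and the $\lambda = a_k$ endpoint are routine by comparison.
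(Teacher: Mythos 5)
Your proof is correct and takes essentially the same route as the paper's: reduce the inequality to the extremal eigenvalues using the structure of the concave quadratic $\lambda \mapsto \lambda - \frac{C}{2}\lambda^2$ (the paper instead shows this quadratic is minimized at $\lambda_{\min}(M_k)$ once $\lambda_{\max}(M_k) \leq 1/C$, so that only one endpoint needs checking), and then use \ref{prop-maxeig} for the decay of $\lambda_{\max}(M_k)$ and \ref{prop-condnum} for the decay of $\lambda_{\max}(M_k)\kappa(M_k) = \lambda_{\max}(M_k)^2/\lambda_{\min}(M_k)$ to handle the dominant term $\frac{CC_2}{2}\lambda_{\max}(M_k)^2$ against $\frac{1}{2}\lambda_{\min}(M_k)$. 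The only cosmetic differences are that you verify the $\lambda_{\max}$ endpoint directly rather than eliminating it, and argue the $\lambda_{\min}$ endpoint directly rather than by contradiction.
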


\begin{proof}
We begin by showing that right hand side of \eqref{eqn-grad-norm-lb} is minimized when $\lambda = \lambda_{\min}(M_k)$ for all $k$ sufficiently large and then establish \eqref{eqn-grad-norm-lb} for this choice of $\lambda$. 
We first establish
\begin{equation} \label{eqn-def-K}
\lambda_{\max}(M_k) + \lambda_{\min}(M_k) \leq \frac{2}{C}.
\end{equation}
By \ref{prop-maxeig}, $\lambda_{\max}(M_k) \to 0$. Therefore, there exists a $k_0 \in \mathbb{N}$ such that for all $k \geq k_0$, $\lambda_{\max}(M_k) \leq 1/C$. Therefore, (\ref{eqn-def-K}) follows for all $k \geq k_0$. Moreover, for every $k \geq k_0$, we see that (\ref{eqn-def-K}) holds for any other eigenvalue of $M_k$ in place of $\lambda_{\max}(M_k)$. Letting $\lambda$ denote any eigenvalue of $M_k$ and noting that $\lambda - \lambda_{\min}(M_k) \geq 0$,
\begin{equation}
(\lambda - \lambda_{\min}(M_k) ) (\lambda + \lambda_{\min}(M_k) ) \leq \frac{2}{C}(\lambda - \lambda_{\min}(M_k) ),
\end{equation}
which can be rearranged to conclude that, for any $k \geq k_0$ and for any eigenvalue $\lambda$ of $M_k$,
\begin{equation}
\begin{aligned}
\lambda_{\min}(M_k) - \frac{C}{2}\lambda_{\min}(M_k)^2 \leq \lambda - \frac{C}{2}\lambda^2.
\end{aligned}
\end{equation} 
It follows that the right hand side of \eqref{eqn-grad-norm-lb} is minimized by $\lambda_{\min}(M_k)$.

We now establish \eqref{eqn-grad-norm-lb} with $\lambda = \lambda_{\min}(M_k)$ by contradiction. Suppose that there exists a subsequence $\lbrace k_j \rbrace$ for which \eqref{eqn-grad-norm-lb} does not hold; that is, by rearranging \eqref{eqn-grad-norm-lb},
\begin{equation}
C \lambda_{\min}(M_{k_j})^2 + CC_2 \lambda_{\max}(M_{k_j})^2 > \lambda_{\min}(M_{k_j}).
\end{equation}
Taking an upper bound on the left hand side,
\begin{align}
2C(C_2 + 1) \lambda_{\max}(M_{k_j})^2 > \lambda_{\min}(M_{k_j}).
\end{align}
Hence,
\begin{equation} \label{eqn-relaxed-condnum-control}
\lambda_{\max}(M_{k_j}) \kappa( M_{k_j} ) > \frac{1}{2 C(C_2 + 1)},
\end{equation}
which contradicts \ref{prop-condnum} for $j$ sufficiently large. Hence, there is a $K \geq k_0$ such that, for all $k \geq K$, \eqref{eqn-grad-norm-lb} holds.
\qed
\end{proof}

Now, using $\tau$, we will define a sequence of iterates $\lbrace \psi_k \rbrace$ that we will eventually couple with $\lbrace \beta_k \rbrace$. To define $\lbrace \psi_k \rbrace$, let
\begin{enumerate}
\item $Z_k := X_{\tau+1+k}$ for all $k \in \mathbb{N}$.
\item $\psi_0 := \beta_{\tau + 1}$.
\item For all $k \in \lbrace 0 \rbrace \cup \mathbb{N}$, $P_k := M_{\tau+1+k}$ and
\begin{equation} \label{eqn-psi-iter}
\psi_{k+1} := \psi_k - P_k \dot f( \psi_k, Z_{k+1}) \1{ \norm{\dot F(\psi_k)}_2^2 > \delta},
\end{equation}
where $\1{ \cdot} $ is the indicator function for the given event.
\end{enumerate} 

There are two properties of these quantities that are worth noting: $\lbrace Z_k \rbrace$ are independent and identically distributed; $\lbrace P_k \rbrace$ inherit the properties of $\lbrace M_k \rbrace$ with probability one. The former is verified by Theorem 4.1.3 of \cite{durrett2010}, which states that $\lbrace Z_k \rbrace$ are mutually independent and independent of $\mathcal{F}_{\tau+1}$, and have the same distribution as $X_1$. The latter is verified by the following lemma.
\begin{lemma} \label{lemma-pk-satisfy-mk-properties}
With probability one, $\lbrace P_k \rbrace$ satisfy \ref{prop-sympd} to \ref{prop-condnum}. Moreover, with probability one, for all $k \geq 0$ and any eigenvalue $\lambda$ of $P_k$,
\begin{equation}
\frac{1}{2}\lambda_{\min}(P_k) \leq \lambda - \frac{C}{2} \lambda^2 - \frac{C C_2}{2} \lambda_{\max}(P_k)^2.
\end{equation}
\end{lemma}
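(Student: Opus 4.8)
The plan is to exploit the fact that each of \ref{prop-sympd} through \ref{prop-condnum} is a \emph{tail property} of the sequence $\lbrace M_k \rbrace$: re-indexing by the finite shift $\tau + 1$ preserves all four, while the final inequality follows directly from Lemma \ref{lemma-minimality-smallest-eigenvalue} because the shift is bounded below by $K$. The only genuine content is bookkeeping the randomness, which enters solely through the random index $\tau$.

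First I would decompose the sample space over the countable partition $\lbrace \tau = t \rbrace$ for integers $t \geq K$. Since $\tau$ is a finite stopping time with $\Prb{\tau \geq K} = 1$, these events have total probability one. On each event $\lbrace \tau = t \rbrace$ the definitions collapse to the deterministic tail $P_k = M_{t+1+k}$, so it suffices to verify every claim for an arbitrary but fixed shift $t \geq K$ and then take the union over $t$ to obtain each conclusion with probability one.

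Next I would check the four properties on a fixed shift $t$. Property \ref{prop-sympd} is immediate, since every $M_j$, and hence every $P_k = M_{t+1+k}$, is symmetric positive definite. For \ref{prop-maxeig}, the tail sum obeys $\sum_{k=0}^\infty \lambda_{\max}(P_k)^2 = \sum_{j=t+1}^\infty \lambda_{\max}(M_j)^2 \leq \sum_{j=0}^\infty \lambda_{\max}(M_j)^2 < S$, so the same constant $S$ works. For \ref{prop-mineig}, the series $\sum_{k=0}^\infty \lambda_{\min}(P_k) = \sum_{j=t+1}^\infty \lambda_{\min}(M_j)$ differs from the divergent series $\sum_{j=0}^\infty \lambda_{\min}(M_j)$ only by the finite sum of its first $t+1$ terms, hence it still diverges. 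For \ref{prop-condnum}, $\lambda_{\max}(P_k)\kappa(P_k) = \lambda_{\max}(M_{t+1+k})\kappa(M_{t+1+k})$ is a tail of a sequence converging to zero, so it converges to zero as $k \to \infty$.

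Finally, for the displayed inequality I would note that on $\lbrace \tau = t \rbrace$ with $t \geq K$ the index $t+1+k \geq K$ for every $k \geq 0$; thus Lemma \ref{lemma-minimality-smallest-eigenvalue} applies verbatim with $M_{t+1+k} = P_k$ in place of $M_k$, for each $k$ and each eigenvalue $\lambda$ of $P_k$, yielding precisely the asserted bound. Unioning over $t \geq K$ then delivers all conclusions with probability one. I do not anticipate a substantive obstacle: the only care needed is recognizing that conditioning on $\lbrace \tau = t \rbrace$ converts each almost-sure statement into a deterministic tail statement about $\lbrace M_k \rbrace$, and that the assumption $\tau \geq K$ is exactly what lets the eigenvalue inequality be quoted without re-deriving it.
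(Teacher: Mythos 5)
Your proposal is correct and follows essentially the same route as the paper: the paper's proof likewise decomposes over the countable partition $\lbrace \tau = j \rbrace$ (written there as a sum of conditional probabilities, each equal to one because the corresponding property is a tail property of the deterministic sequence $\lbrace M_k \rbrace$), and the displayed eigenvalue inequality follows from Lemma \ref{lemma-minimality-smallest-eigenvalue} via the standing assumption $\Prb{\tau \geq K} = 1$. Your write-up is somewhat more explicit about why each of \ref{prop-sympd}--\ref{prop-condnum} survives a finite shift, but there is no substantive difference in approach.
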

\begin{proof}
The result follows from a standard divide and conquer argument. For \ref{prop-sympd},
\begin{align}
&\Prb{ P_k = P_k', P_k \succ 0 } = \Prb{ M_{\tau+1+k}' = M_{\tau+1+k}, M_{\tau+1+k} \succ 0} \\
&= \sum_{j=0}^\infty \condPrb{ M_{j + 1 +k}' = M_{j + 1 +k}, M_{j+1+k} \succ 0}{\tau = j} \Prb{ \tau = j} \\
&= \sum_{j=0}^\infty \underbrace{\Prb{ M_{j + 1 +k}' = M_{j + 1 +k}, M_{j+1+k} \succ 0 }}_{=1~\text{by \ref{prop-sympd} on } \lbrace M_k \rbrace } \Prb{ \tau=j} \\
&= \sum_{j=0}^\infty \Prb{ \tau = j} \\
&= \Prb{ \tau < \infty } = 1.
\end{align}
Similarly, for \ref{prop-maxeig},
\begin{align}
&\Prb{ \sum_{k=0}^\infty \lambda_{\max}(P_k)^2 < S} \\
&= \sum_{j=0}^\infty \underbrace{\condPrb{ \sum_{k=0}^\infty \lambda_{\max}(M_{j+1+k})^2 < S}{\tau=j}}_{=1~\text{by \ref{prop-maxeig} on } \lbrace M_k \rbrace} \Prb{ \tau=j} \\
&= \Prb{ \tau < \infty} =1.
\end{align}
The analogous arguments will show that the remaining conclusions of the lemma hold.
\qed
\end{proof}

From these properties, we see that if (\ref{eqn-psi-iter}) did not have the indicator term, then the only difference between $\lbrace \psi_k \rbrace$ and $\lbrace \beta_k \rbrace$ is the initialization---the fact that $\lbrace P_k \rbrace$ and $\lbrace M_k \rbrace$ are distinct is of little importance for our purposes as long as \ref{prop-sympd} to \ref{prop-condnum} are satisfied. Thus, we see that $\lbrace \beta_k \rbrace$ exhibit an analogue of the strong Markov property.

Now, to couple these two iterate sequences, let $\mathcal{G}_0 = \mathcal{F}_{\tau+1}$ and $\mathcal{G}_k = \sigma(\mathcal{F}_{\tau+1},Z_1,\ldots,Z_k)$, and, for $\delta >0$, define $\tau_\delta$ to be a stopping time with respect to $\lbrace \mathcal{G}_k \rbrace$ such that
\begin{equation}
\tau_\delta = \min\left\lbrace k \geq 0: \norm{ \dot{F}(\psi_k)}_2^2 \leq \delta \right\rbrace.
\end{equation} 
Then, on the event $\lbrace k \geq \tau_\delta \rbrace$, $\psi_{k+1} = \psi_k$. Moreover, on the event $\lbrace k < \tau_\delta \rbrace$,
\begin{equation}
\begin{aligned}
\psi_{k+1} &= \psi_{k} - P_k \dot{f}( \psi_k, Z_{k+1}) \\
&= \beta_{\tau+1+k} - M_{\tau+1 + k} \dot{f}( \beta_{\tau+1+k}, X_{\tau+2+k}) = \beta_{\tau+2+k}
\end{aligned}
\end{equation}
follows by induction. Therefore, for all $0 \leq k \leq \tau_\delta$, $\psi_k = \beta_{\tau+1+k}$; that is, the sequences are coupled in this interval. Owing to this coupling, we see that $\tau_\delta + 1$ is the number of iterates for $\beta_k$ to be within a ``$\delta$-region of zero gradient'' after iterate $\tau$. We now apply Zoutendijk's global convergence approach to conclude that $\tau_\delta$ is finite with probability one.

\subsubsection{Zoutendijk's Global Convergence Approach} 

We now apply Zoutendijk's global convergence approach \cite{zoutendijk1970} to $\lbrace \psi_k \rbrace$ to conclude that $\Prb{ \tau_\delta < \infty} = 1$. First, by the fundamental theorem of calculus and \ref{condition-dg-lipschitz} (recall, with constant $C$),
\begin{align}
F(\psi_{k+1}) &\leq F(\psi_k) + \dot{F}(\psi_k)'(\psi_{k+1} - \psi_k) + \frac{C}{2}\norm{ \psi_{k+1} - \psi_k}_2^2 \\
			  & \begin{aligned}
			  &=F(\psi_k) - \dot{F}(\psi_k)'P_k \dot f (\psi_k, Z_{k+1}) \1{ \norm{ \dot{F}(\psi_k)}_2^2 > \delta} \\
			  &+ \frac{C}{2} \norm{ P_k \dot f(\psi_k, Z_{k+1})}_2^2 \1{ \norm{ \dot{F}(\psi_k)}_2^2 > \delta}. 
			  \end{aligned} \label{eqn-ftc}
\end{align}
We now take the conditional expectation of the resulting inequality with respect to $\mathcal{G}_k$. Note, since $\psi_k, P_k$ are measurable with respect to $ \mathcal{G}_k$ and $Z_{k+1}$ is independent of $\mathcal{G}_k$, then $\cond{ F(\psi_k)}{\mathcal G_k} = F(\psi_k)$ and 
\begin{equation}
\cond{ \dot{F}(\psi_k)'P_k \dot f(\psi_k, Z_{k+1})}{\mathcal{G}_k} = \dot{F}(\psi_k)'P_k \dot{F}(\psi_k).
\end{equation}
For the third term in (\ref{eqn-ftc}), we will need to make use of \ref{condition-noise-grad} (with parameters $C_1,C_2 \geq 0$).
\begin{align}
&\cond{ \norm{ P_k \left[ \dot f(\psi_k, Z_{k+1}) - \dot{F}(\psi_k) + \dot{F}(\psi_k) \right]}_2^2}{\mathcal G_k} \nonumber \\
&= \cond{ \norm{P_k \left[ \dot f(\psi_k,Z_{k+1}) - \dot{F}(\psi_k) \right]}_2^2}{\mathcal{G}_k} + \dot F(\psi_k)'P_k^2 \dot F(\psi_k) \\
&\leq C_1\lambda_{\max}(P_k)^2 + C_2 \lambda_{\max}(P_k)^2 \norm{ \dot{F}(\psi_k)}_2^2 + \dot F(\psi_k)'P_k^2 \dot F(\psi_k)
\end{align}
Putting the calculation for these three terms together in (\ref{eqn-ftc}), we conclude
\begin{equation} \label{eqn-ftc-cond}
\begin{aligned}
&\cond{F(\psi_{k+1})}{\mathcal{G}_k} \leq F(\psi_k) + \frac{CC_1}{2} \lambda_{\max}(P_k)^2 -  \1{ \norm{ \dot{F}(\psi_k)}_2^2 > \delta} \\
&\times \left[\dot{F}(\psi_k)'P_k \dot F(\psi_k) - \frac{C}{2} \dot{F}(\psi_k)'P_k^2 \dot F(\psi_k) - \frac{C C_2}{2} \lambda_{\max}(P_k)^2 \norm{ \dot F(\psi_k)}_2^2 \right]. 
\end{aligned}
\end{equation}
We can find an upper bound for (\ref{eqn-ftc-cond}) by finding a lower bound for the third term in the right hand side of the inequality. In particular, we will lower bound
\begin{equation} \label{eqn-ftc-cond-min}
\min_{v \in \mathbb{S}^{p-1}} v'P_k v - \frac{C}{2} v' P_k^2 v - \frac{C C_2}{2} \lambda_{\max}(P_k)^2 \norm{v}_2^2,
\end{equation}
where $\mathbb S^{p-1}$ is the unit sphere in $\mathbb {R}^p$.\footnote{We could drop the last term in the optimization problem as it is a constant.}

Using the Schur decomposition of $P_k$,\footnote{Since $P_k$ is random, its Schur decomposition is random.} we can transform (\ref{eqn-ftc-cond-min}) into the equivalent problem
\begin{equation} \label{eqn-ftc-cond-min-2}
\min_{v \in \mathbb{S}^{p-1}} \sum_{i=1}^p \left[\lambda_i  - \frac{C}{2} \lambda_i^2 - \frac{C C_2}{2} \lambda_1^2\right] v_i^2,
\end{equation}
where $\lambda_{\max}(P_k) = \lambda_1 \geq \lambda_2 \geq \cdots \geq \lambda_p = \lambda_{\min}(P_k)$; and $v_i$ are the components of $v$. Applying Lemma \ref{lemma-pk-satisfy-mk-properties}, the solution to (\ref{eqn-ftc-cond-min-2}) is lower bounded by $\lambda_{\min}(P_k)/2$.

Plugging this lower bound into (\ref{eqn-ftc-cond}), we have that
\begin{equation}
\begin{aligned}
\cond{F(\psi_{k+1})}{\mathcal{G}_k} &\leq F(\psi_k) + \frac{CC_1}{2} \lambda_{\max}(P_k)^2 \\
&\quad - \frac{1}{2}\lambda_{\min}(P_k)\norm{ \dot{F} (\psi_k)}_2^2 \1{ \norm{ \dot{F}(\psi_k)}_2^2 > \delta}.
\end{aligned}
\end{equation}
Rearranging and applying the condition in the indicator,
\begin{equation}
\begin{aligned}
&\frac{\delta}{2}\lambda_{\min}(P_k) \1{ \norm{ \dot{F}(\psi_k)}_2^2 > \delta} \\
&\quad \leq F(\psi_k) - \cond{ F(\psi_{k+1})}{ \mathcal{G}_k} + \frac{C C_1}{2} \lambda_{\max}(P_k)^2.
\end{aligned}
\end{equation}
Now, recall that $P_k$ are measurable with respect to $\mathcal{F}_{\tau+1}$ and recall that $\mathcal{F}_{\tau+1} \subset \mathcal{G}_k$ for all $k$. Therefore,
\begin{equation}
\begin{aligned}
&\frac{\delta}{2}\lambda_{\min}(P_k) \condPrb{ \norm{ \dot{F}(\psi_k)}_2^2 > \delta }{ \mathcal{F}_{\tau+1}} \\
&\quad \leq \cond{ F(\psi_k) - F(\psi_{k+1})}{\mathcal{F}_{\tau+1}} + \frac{C C_1}{2} \lambda_{\max}(P_k)^2.
\end{aligned}
\end{equation}
Moreover, by (a) summing both sides from $k=0$ to $n \in \mathbb{N}$, (b) recalling that $F(\psi_0)$ is finite with probability one given $\mathcal{F}_{\tau+1}$, and (c) applying \ref{prop-maxeig} from Lemma \ref{lemma-pk-satisfy-mk-properties}, we conclude
\begin{equation}
\begin{aligned}
&\frac{\delta}{2} \sum_{k=0}^n \lambda_{\min}(P_k) \condPrb{ \norm{ \dot{F}(\psi_k)}_2^2 > \delta }{ \mathcal{F}_{\tau+1}} \\
&\quad \leq F(\psi_0) - \cond{ F(\psi_{n+1})}{\mathcal{F}_{\tau+1}} + \frac{SCC_1}{2}.
\end{aligned}
\end{equation}

Recall that, we have assumed that $F(\theta)$ is bounded from below by some constant $F_{l.b.}$ as a core assumption. Using this, we see that
\begin{equation}
\begin{aligned}
\frac{\delta}{2} \sum_{k=0}^n \lambda_{\min}(P_k) \condPrb{ \norm{ \dot{F}(\psi_k)}_2^2 > \delta }{ \mathcal{F}_{\tau+1}} 
\leq F(\psi_0) - F_{l.b.} + \frac{SCC_1}{2}.
\end{aligned}
\end{equation}

Moreover, note that, for all $k \geq 0$,
\begin{equation}
\condPrb{ \bigcap_{j=0}^\infty \left\lbrace \norm{ \dot{F}(\psi_j)}_2^2 > \delta \right\rbrace }{ \mathcal{F}_{\tau+1}} \leq \condPrb{ \norm{ \dot{F}(\psi_k)}_2^2 > \delta }{ \mathcal{F}_{\tau+1}}. 
\end{equation}
Therefore, for arbitrary $n$,
\begin{equation}
\condPrb{ \bigcap_{j=0}^\infty \left\lbrace \norm{ \dot{F}(\psi_j)}_2^2 > \delta \right\rbrace }{ \mathcal{F}_{\tau+1}} \leq \frac{F(\psi_0) - F_{l.b.} + S C C_1 /2 }{(\delta/2)\sum_{k=0}^n \lambda_{\min}(P_k)}
\end{equation} 
By \ref{prop-mineig} from Lemma \ref{lemma-pk-satisfy-mk-properties}, the right hand side of this inequality can be made arbitrarily small, which implies that the conditional probability on the left hand side is zero.

That is,
\begin{equation}
0 = \condPrb{ \bigcap_{j=0}^\infty  \left\lbrace \norm{ \dot{F}(\psi_j)}_2^2 > \delta \right\rbrace }{ \mathcal{F}_{\tau+1}} = \condPrb{ \tau_{\delta} = \infty}{\mathcal{F}_{\tau+1}}.
\end{equation}
In other words, we have concluded that $\condPrb{ \tau_\delta < \infty}{\mathcal{F}_{\tau+1}} = 1$ with probability one for any finite stopping time $\tau$. With this result, our last step is to use induction. 

For now, we will say that the iterates are in a $\delta$-region of zero gradient if the squared-norm of the gradient of the iterate is no greater than $\delta$. If we let $\tau = -1$, then $\tau_\delta$ is the first time the iterates enter a $\delta$-region of zero gradient. Let $T_1(\delta) = \tau_\delta$ when $\tau = -1$. Then, from the above argument, we have shown that $T_1(\delta)$ is a finite stopping time. Now, define $T_j(\delta)$ to be the $j^{th}$ time that the iterates enter a $\delta$-region of zero gradient. Suppose that $T_j(\delta)$ is finite. Then, define $\tau = T_j(\delta)$. Then, $\tau_\delta$ for this $\tau$ is the next time that the iterates enter a $\delta$-region of zero gradient. That is, $T_{j+1}(\delta) = \tau_\delta + T_j(\delta)$. Since we have assumed that $\tau = T_{j}(\delta)$ is finite, we conclude that $\tau_\delta$ is finite, which implies that $T_{j+1}(\delta)$ is finite. Therefore, by induction we have proven the following result.

\begin{theorem} \label{theorem-revisit}
Let $F$ be a Bottou-Curtis-Nocedal function (\ref{bcn}) and let $\lbrace \beta_k \rbrace$ be the iterates generated by Stochastic Gradient Descent satisfying \ref{prop-sympd} to \ref{prop-condnum} (\S \ref{subsection-sgd}), then, for any $\delta > 0$,
\begin{equation}
\condPrb{ \norm{ \dot F( \beta_k)}_2^2 \leq \delta, ~i.o.}{\mathcal{F}_0} = 1 ~\text{with probability 1},
\end{equation}
where $\mathcal{F}_0 = \sigma(\beta_0)$.
\end{theorem}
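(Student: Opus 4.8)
The plan is to assemble the coupling construction and the Zoutendijk estimate established above into an infinite, strictly increasing sequence of finite entry times into the region $\{\norm{\dot{F}(\theta)}_2^2 \leq \delta\}$, and then to read off the infinitely-often conclusion directly from the existence of that sequence. The engine driving everything is the bound obtained just above: for any finite stopping time $\tau \geq K$, the resumed sequence $\{\psi_k\}$ coincides with $\{\beta_{\tau+1+k}\}$ on $[0,\tau_\delta]$, and $\condPrb{\tau_\delta < \infty}{\mathcal{F}_{\tau+1}} = 1$ with probability one. In words, no matter where we restart the analysis, the iterates are guaranteed to re-enter the $\delta$-region after finitely many further steps. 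Because only the tail of the sequence matters for an $i.o.$ statement, the requirement $\tau \geq K$ is harmless: entries before iterate $K$ are irrelevant, so I may take the first restart to be an arbitrary finite stopping time at least $K$.

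With this engine, I would build the entry times by recursion and induction. Let $T_1(\delta)$ be the first time the iterates enter the $\delta$-region, which is finite with probability one by applying the engine from the start. Assuming $T_j(\delta)$ is a finite stopping time, set $\tau = T_j(\delta)$: since $\tau$ is finite, Lemma \ref{lemma-pk-satisfy-mk-properties} guarantees that the resumed rates $\{P_k\}$ still satisfy \ref{prop-sympd} to \ref{prop-condnum}, so the engine applies verbatim and produces the next entry time $T_{j+1}(\delta) = T_j(\delta) + \tau_\delta$ with $\tau_\delta < \infty$ almost surely; hence $T_{j+1}(\delta) < \infty$. By induction every $T_j(\delta)$ is a finite stopping time. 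Since the $T_j(\delta)$ are strictly increasing and $\norm{\dot{F}(\beta_{T_j(\delta)})}_2^2 \leq \delta$ for each $j$, the event $\{\norm{\dot{F}(\beta_k)}_2^2 \leq \delta, ~i.o.\}$ is implied by the event that all $T_j(\delta)$ are finite, and the latter has conditional probability one given $\mathcal{F}_0$.

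The main obstacle is justifying that the engine legitimately restarts at the \emph{random} stopping time $\tau = T_j(\delta)$ rather than at a deterministic time, and then bookkeeping the conditioning correctly. The first point is exactly what the strong-Markov-like property secured through coupling provides: by Theorem 4.1.3 of \cite{durrett2010} the resumed innovations $\{Z_k\}$ are i.i.d.\ and independent of $\mathcal{F}_{\tau+1}$, so the conditional expectations feeding the Zoutendijk telescoping sum are computed correctly against the filtration $\{\mathcal{G}_k\}$, and $\condPrb{\tau_\delta < \infty}{\mathcal{F}_{\tau+1}} = 1$ holds for \emph{every} finite $\tau$, not merely deterministic ones. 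The second point is that each step of the induction is an almost-sure identity among conditional probabilities, with the outer ``with probability one'' reflecting the randomness of $\beta_0$ through $\mathcal{F}_0$; the induction must therefore be carried out on the event of full measure where every $T_j(\delta)$ is finite, and since a countable intersection of full-measure events is again of full measure, the finiteness of all $T_j(\delta)$ holds on a single event of conditional probability one given $\mathcal{F}_0$. As $\mathcal{F}_0 \subseteq \mathcal{F}_{\tau+1}$, passing from the $\mathcal{F}_{\tau+1}$-conditional statements of the engine to the $\mathcal{F}_0$-conditional conclusion is immediate by the tower property, which yields $\condPrb{\norm{\dot{F}(\beta_k)}_2^2 \leq \delta, ~i.o.}{\mathcal{F}_0} = 1$ with probability one.
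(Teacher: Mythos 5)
Your proposal is correct and follows essentially the same route as the paper: the coupled resumed sequence $\lbrace \psi_k \rbrace$ with the Zoutendijk estimate giving $\condPrb{\tau_\delta < \infty}{\mathcal{F}_{\tau+1}} = 1$ for any finite stopping time, followed by induction on the entry times $T_j(\delta)$ and a countable intersection of full-measure events. Your explicit remark that the restriction $\tau \geq K$ is harmless because only the tail matters for an $i.o.$ statement is a slightly more careful handling of the base case than the paper's choice of $\tau = -1$, but the argument is otherwise identical.
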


\subsubsection{Inclusion-Exclusion and Markov's Inequality}

Our next step is to prove that
\begin{equation}
\condPrb{ \norm{ \dot F(\beta_k)}_2 > \delta, ~i.o.}{\mathcal{F}_0} = 0 ~ \text{with probability 1}.
\end{equation}

Again, we will temporarily drop the conditioning on $\mathcal{F}_0$ for simplicity of the notation. By Theorem \ref{theorem-revisit} and the inclusion-exclusion principle, 
\begin{align}
1 &= \Prb{\left\lbrace \norm{ \dot F(\beta_k)}_2 \leq \delta, ~i.o. \right\rbrace \cup \left\lbrace \norm{ \dot F(\beta_k)}_2 > \delta, ~i.o. \right\rbrace } \\
  &\begin{aligned}
  &= \Prb{ \norm{ \dot F(\beta_k)}_2 \leq \delta, ~i.o.} + \Prb{\norm{ \dot F(\beta_k)}_2 > \delta, ~i.o.} \\
  &- \Prb{ \left\lbrace \norm{ \dot F(\beta_k)}_2 \leq \delta, ~i.o. \right\rbrace \cap \left\lbrace \norm{ \dot F(\beta_k)}_2 > \delta, ~i.o. \right\rbrace }.
  \end{aligned}
\end{align}
Applying Theorem \ref{theorem-revisit} again, we conclude that
\begin{equation} \label{eqn-inc-exc}
\begin{aligned}
&\Prb{\norm{ \dot F(\beta_k)}_2 > \delta, ~i.o.} \\
&\quad = \Prb{ \left\lbrace \norm{ \dot F(\beta_k)}_2 \leq \delta, ~i.o. \right\rbrace \cap \left\lbrace \norm{ \dot F(\beta_k)}_2 > \delta, ~i.o. \right\rbrace }.
\end{aligned}
\end{equation}

We will now show that the probability of the right hand side is zero. Note, for any outcome
\begin{equation}
\omega \in \left\lbrace \norm{ \dot F(\beta_k)}_2 \leq \delta, ~i.o. \right\rbrace \cap \left\lbrace \norm{ \dot F(\beta_k)}_2 > \delta, ~i.o. \right\rbrace,
\end{equation}
there must be an infinite subsequence of $\mathbb{N}$ such that $\beta_k$ is in a $\delta^2$-region of zero gradient and then $\beta_{k+1}$ exits this $\delta^2$-region of zero gradient. Suppose this were not true. Then, there are two cases. In the first case, $\beta_k$ enters a $\delta^2$-region of zero gradient and then never leaves, in which case
\begin{equation}
\omega \not\in \left\lbrace \norm{ \dot F(\beta_k)}_2 > \delta, ~i.o. \right\rbrace.
\end{equation}
In the second case, we have that $\beta_k$ exits a $\delta^2$-region of zero gradient, and never enters again, which implies 
\begin{equation}
\omega \not\in \left\lbrace \norm{ \dot F(\beta_k)}_2 \leq \delta, ~i.o. \right\rbrace.
\end{equation}
In both cases, we have a contradiction. Therefore, using just one of the cases, we conclude that
\begin{equation} \label{eqn-set-inc-1}
\begin{aligned}
&\left\lbrace \norm{ \dot F(\beta_k)}_2 \leq \delta, ~i.o. \right\rbrace \cap \left\lbrace \norm{ \dot F(\beta_k)}_2 > \delta, ~i.o. \right\rbrace \\
&\quad \subset \left\lbrace \norm{\dot{F}(\beta_k)}_2 \leq \delta,~ \norm{\dot{F}(\beta_{k+1})}_2 > \delta, ~i.o. \right\rbrace.
\end{aligned}
\end{equation}

We can write this latter event as
\begin{equation} \label{eqn-set-inc-2}
\begin{aligned}
&\left\lbrace \norm{\dot{F}(\beta_k)}_2 \leq \delta,~ \norm{\dot{F}(\beta_{k+1})}_2 > \delta, ~i.o. \right\rbrace \\
&\quad = 
\left\lbrace \norm{ \dot{F}(\beta_{k+1})}_2 \1{ \norm{\dot{F}(\beta_k)}_2 \leq \delta} > \delta, ~i.o. \right\rbrace.
\end{aligned}
\end{equation}

We will now show that this ultimate event occurs with probability zero using Markov's inequality and the Borel-Cantelli lemma. Let $\epsilon > 0$ and recall that $C > 0$ is the parameter in \ref{condition-dg-lipschitz}. Then,
\begin{align}
&\condPrb{ \norm{ \dot{F}(\beta_{k+1})}_2 \1{ \norm{\dot{F}(\beta_k)}_2 \leq \delta} \geq \delta + C \epsilon }{\mathcal{F}_k} \\
&\begin{aligned}
&\leq \mathbb{P}\bigg{[} \left( \norm{ \dot{F}(\beta_{k+1}) - \dot{F}(\beta_k) }_2 + \norm{ \dot{F}(\beta_k)}_2 \right) \1{ \norm{\dot{F}(\beta_k)}_2\leq \delta}  \\
&\quad  \geq \delta + C \epsilon \bigg{\vert} \mathcal{F}_k \bigg{]}
\end{aligned} \\
&\leq \condPrb{ \norm{ \dot{F}(\beta_{k+1}) - \dot{F}(\beta_k)}_2 \1{ \norm{\dot{F}(\beta_k)}_2 \leq \delta} + \delta \geq \delta + C \epsilon}{\mathcal{F}_k}\\
& \leq \condPrb{ C \norm{\beta_{k+1} - \beta_k}_2 \1{ \norm{\dot{F}(\beta_k)}_2 \leq \delta} \geq  C \epsilon}{\mathcal{F}_k} \\
& \leq \condPrb{ \norm{M_k \dot{f}(\beta_k, X_{k+1})}_2 \1{ \norm{\dot{F}(\beta_k)}_2 \leq \delta} \geq   \epsilon}{\mathcal{F}_k}
\end{align}

Applying Markov's inequality to the last conditional probability and using \ref{condition-noise-grad},
\begin{align}
&\condPrb{ \norm{ \dot{F}(\beta_{k+1})}_2 \1{ \norm{\dot{F}(\beta_k)}_2 \leq \delta} \geq \delta + C \epsilon }{\mathcal{F}_k} \\
& \leq \frac{\lambda_{\max}(M_k)^2}{\epsilon^2}\left[ C_1 + (C_2+1) \norm{ \dot{F}(\beta_k)}_2^2 \right] \1{ \norm{ \dot{F}(\beta_k)}_2 \leq \delta}  \\
& \leq \frac{\lambda_{\max}(M_k)^2}{\epsilon^2}\left[ C_1 + (C_2+1)\delta^2 \right]
\end{align}
By \ref{prop-maxeig}, the sum of the right hand side is bounded with probability one. Therefore, by the conditional second Borel-Cantelli lemma (Theorem 5.3.2 of \cite{durrett2010}),
\begin{equation}
\Prb{\norm{ \dot{F}(\beta_{k+1})}_2 \1{ \norm{\dot{F}(\beta_k)}_2 \leq \delta} \geq \delta + C \epsilon, ~i.o.} = 0. 
\end{equation}
Since $\epsilon > 0$ is arbitrary, then this conclusion holds for each element in the sequence $\lbrace \epsilon_m \rbrace$ where $\epsilon_m \downarrow 0$. Therefore,
\begin{align}
&\Prb{\norm{ \dot{F}(\beta_{k+1})}_2 \1{ \norm{\dot{F}(\beta_k)}_2 \leq \delta} > \delta, ~i.o.} \\
&=\Prb{\bigcup_{m \in \mathbb{N}} \left\lbrace \norm{ \dot{F}(\beta_{k+1})}_2 \1{ \norm{\dot{F}(\beta_k)}_2 \leq \delta} \geq \delta + C \epsilon_m, ~i.o. \right\rbrace} \\
&\leq \sum_{m=1}^\infty \Prb{\norm{ \dot{F}(\beta_{k+1})}_2 \1{ \norm{\dot{F}(\beta_k)}_2 \leq \delta} \geq \delta + C \epsilon_m, ~i.o.} \\
&= 0.  
\end{align}

Therefore, by using this result with (\ref{eqn-inc-exc}), (\ref{eqn-set-inc-1}) and (\ref{eqn-set-inc-2}), we conclude the following result.
\begin{theorem} \label{theorem-exit}
Let $F$ be a Bottou-Curtis-Nocedal function (\ref{bcn}) and let $\lbrace \beta_k \rbrace$ be the iterates generated by Stochastic Gradient Descent satisfying \ref{prop-sympd} to \ref{prop-condnum} (\S \ref{subsection-sgd}), then, for any $\delta > 0$,
\begin{equation}
\condPrb{ \norm{ \dot F( \beta_k)}_2 > \delta, ~i.o.}{\mathcal{F}_0} = 0 ~\text{with probability 1},
\end{equation}
where $\mathcal{F}_0 = \sigma(\beta_0)$.
\end{theorem}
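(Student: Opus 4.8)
The plan is to bootstrap from Theorem~\ref{theorem-revisit}, which already guarantees (conditionally on $\mathcal{F}_0$, with probability one) that the event $\{\norm{\dot F(\beta_k)}_2^2 \leq \delta, ~i.o.\}$ has probability one. The target event $\{\norm{\dot F(\beta_k)}_2 > \delta, ~i.o.\}$ is \emph{not} the complement of this revisiting event—the trajectory could in principle oscillate across the threshold forever—so the first move is to relate the two through the inclusion--exclusion principle. Since the revisiting event has probability one, its union with the target event also has probability one, and inclusion--exclusion collapses to the identity
\[
\Prb{\norm{\dot F(\beta_k)}_2 > \delta, ~i.o.} = \Prb{\{\norm{\dot F(\beta_k)}_2 \leq \delta, ~i.o.\}\cap\{\norm{\dot F(\beta_k)}_2 > \delta, ~i.o.\}},
\]
so it suffices to show the right-hand intersection has probability zero.

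Next I would reduce this intersection to a boundary-crossing event. On any outcome where the trajectory lies inside $\{\norm{\dot F}_2 \leq \delta\}$ infinitely often and outside it infinitely often, a short contradiction argument—if crossings were only finite, the trajectory would eventually remain on one side, negating one of the two $i.o.$ statements—shows there must be infinitely many indices $k$ with $\norm{\dot F(\beta_k)}_2 \leq \delta$ yet $\norm{\dot F(\beta_{k+1})}_2 > \delta$. This yields a set inclusion into $\{\norm{\dot F(\beta_{k+1})}_2\1{\norm{\dot F(\beta_k)}_2\leq\delta} > \delta, ~i.o.\}$, and it is the probability of this crossing event that I would bound.

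The heart of the argument is a summable conditional bound on the crossing probability. Using Lipschitz continuity of $\dot F$ (Condition~\ref{condition-dg-lipschitz}) and the triangle inequality, on the event $\norm{\dot F(\beta_k)}_2\leq\delta$ one has $\norm{\dot F(\beta_{k+1})}_2 \leq \delta + C\norm{M_k \dot f(\beta_k,X_{k+1})}_2$, so a crossing that overshoots to level $\delta + C\epsilon$ forces $\norm{M_k\dot f(\beta_k,X_{k+1})}_2 \geq \epsilon$. Applying the second-moment Markov inequality conditionally on $\mathcal{F}_k$, bounding $\cond{\norm{M_k\dot f(\beta_k,X_{k+1})}_2^2}{\mathcal{F}_k}\leq \lambda_{\max}(M_k)^2\,\cond{\norm{\dot f(\beta_k,X_{k+1})}_2^2}{\mathcal{F}_k}$, and invoking the noise model~\ref{condition-noise-grad} together with the indicator restriction $\norm{\dot F(\beta_k)}_2\leq\delta$ produces the clean bound $\tfrac{\lambda_{\max}(M_k)^2}{\epsilon^2}[C_1+(C_2+1)\delta^2]$. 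The indicator is crucial here: it caps $\norm{\dot F(\beta_k)}_2^2$ by $\delta^2$ and thereby converts the data-dependent \ref{condition-noise-grad} term into a deterministic constant. By Property~\ref{prop-maxeig} these bounds are summable with probability one, so the conditional second Borel--Cantelli lemma (Theorem~5.3.2 of~\cite{durrett2010}) forces the level-$(\delta+C\epsilon)$ crossings to occur only finitely often almost surely.

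Finally, I would pass from the overshoot level $\delta+C\epsilon$ to the strict threshold $\delta$ by taking a sequence $\epsilon_m \downarrow 0$ and using countable subadditivity, after which the crossing event has probability zero; combined with the inclusion--exclusion identity and the set inclusions above, this gives the claim. I expect the main obstacle to be the crossing-event reduction paired with obtaining a \emph{summable} bound: Theorem~\ref{theorem-revisit} alone is far too weak, since it permits infinitely many exits, and the entire upgrade to almost-sure convergence hinges on recognizing that each exit demands an atypically large increment $M_k\dot f$ whose conditional second moment, once tamed by the threshold indicator, is summable via~\ref{prop-maxeig}—exactly the ingredient a conditional Borel--Cantelli argument requires.
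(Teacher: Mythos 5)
Your proposal is correct and follows essentially the same route as the paper's own proof: the inclusion--exclusion reduction to the intersection event, the set inclusion into the boundary-crossing event, the conditional Markov bound $\lambda_{\max}(M_k)^2[C_1+(C_2+1)\delta^2]/\epsilon^2$ obtained by using the indicator to cap the \ref{condition-noise-grad} term, summability via \ref{prop-maxeig}, the conditional second Borel--Cantelli lemma, and the final passage $\epsilon_m \downarrow 0$ by countable subadditivity. No gaps.
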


Theorem \ref{theorem-exit} supplies the following corollary. 
\begin{corollary} \label{corollary-convergence}
Let $F$ be a Bottou-Curtis-Nocedal function (\ref{bcn}) and let $\lbrace \beta_k \rbrace$ be the iterates generated by Stochastic Gradient Descent satisfying \ref{prop-sympd} to \ref{prop-condnum} (\S \ref{subsection-sgd}), then
\begin{equation}
\condPrb{ \lim_{k \to \infty} \norm{ \dot{F} (\beta_k)}_2 = 0}{\mathcal{F}_0} = 1, ~\text{with probability 1},
\end{equation}
where $\mathcal{F}_0 = \sigma(\beta_0)$.
\end{corollary}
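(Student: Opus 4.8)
The plan is to read off almost sure convergence directly from the infinitely-often statement of Theorem \ref{theorem-exit}, by expressing the non-convergence event as a \emph{countable} union of the events that theorem already controls. As elsewhere in this section, I suppress the conditioning on $\mathcal{F}_0$ in the prose and reinstate it only in the final display.

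First I would recast what it means for the limit to fail. Since $\norm{\dot{F}(\beta_k)}_2 \geq 0$, the sequence converges to zero if and only if $\limsup_k \norm{\dot{F}(\beta_k)}_2 = 0$, i.e.\ if and only if for every $\delta > 0$ the event $\lbrace \norm{\dot{F}(\beta_k)}_2 > \delta \rbrace$ occurs only finitely often. The negation is that $\norm{\dot{F}(\beta_k)}_2 > \delta$ infinitely often for \emph{some} $\delta > 0$; and because any such $\delta$ may be replaced by a smaller positive value without affecting the ``infinitely often'' clause, it suffices to let $\delta$ range over a fixed sequence $\delta_m \downarrow 0$. This gives the set identity
\begin{equation}
\left\lbrace \lim_{k \to \infty} \norm{\dot{F}(\beta_k)}_2 \neq 0 \right\rbrace = \bigcup_{m \in \mathbb{N}} \left\lbrace \norm{\dot{F}(\beta_k)}_2 > \delta_m, ~i.o. \right\rbrace,
\end{equation}
and the point of insisting on a countable sequence $\lbrace \delta_m \rbrace$ is precisely so that the right-hand side is amenable to countable subadditivity.

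Next I would apply Theorem \ref{theorem-exit} termwise: for each fixed $m$ there is a $\mathbb{P}$-null set off of which $\condPrb{\norm{\dot{F}(\beta_k)}_2 > \delta_m, ~i.o.}{\mathcal{F}_0} = 0$. Taking the union of these countably many null sets produces a single null set off of which all terms vanish simultaneously, and there countable subadditivity of the conditional probability yields
\begin{equation}
\condPrb{ \lim_{k \to \infty} \norm{\dot{F}(\beta_k)}_2 \neq 0 }{\mathcal{F}_0} \leq \sum_{m \in \mathbb{N}} \condPrb{ \norm{\dot{F}(\beta_k)}_2 > \delta_m, ~i.o. }{\mathcal{F}_0} = 0;
\end{equation}
passing to the complement gives the corollary. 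Given that Theorem \ref{theorem-exit} has already done the substantive work, this deduction is essentially a measure-theoretic tidy-up, and the only point requiring care is the null-set bookkeeping: each application of Theorem \ref{theorem-exit} holds merely almost surely, so I must collect the exceptional sets into one null set before invoking subadditivity. This is harmless exactly because a countable union of null sets is null, which is also what forces the reduction to the countable family $\lbrace \delta_m \rbrace$ in the first step.
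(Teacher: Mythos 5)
Your argument is correct and is essentially the paper's own proof viewed through De Morgan duality: the paper intersects the countably many probability-one events $\left\lbrace \limsup_k \norm{\dot{F}(\beta_k)}_2 \leq \delta_m \right\rbrace$ furnished by Theorem \ref{theorem-exit}, while you take the complementary countable union of null events and invoke subadditivity. Your extra care with collecting the exceptional null sets of the conditional statements is a fair point of rigor, but the substance and the key reduction to a sequence $\delta_m \downarrow 0$ are the same.
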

\begin{proof}
For any $\delta > 0$, by Theorem \ref{theorem-exit},
\begin{align}
1 &= \condPrb{ \left\lbrace \norm{ \dot F( \beta_k)}_2 > \delta, ~i.o. \right\rbrace^c}{\mathcal{F}_0} \\
  &= \condPrb{ \limsup_{k \to \infty} \norm{ \dot{F}(\beta_k)}_2 \leq \delta}{\mathcal{F}_0}.
\end{align}
Since $\delta > 0$ is arbitrary, the preceding result applies to each element in the sequence $\lbrace \delta_m \rbrace$ where $\delta_m \downarrow 0$. Since the countable intersection of probability one events has probability one,
\begin{align}
1 &= \condPrb{ \bigcap_{m \in \mathbb{N}} \left\lbrace \limsup_{k \to \infty} \norm{ \dot{F}(\beta_k)}_2 \leq \delta_m \right\rbrace }{\mathcal{F}_0} \\
  &= \condPrb{ \limsup_{k \to \infty} \norm{ \dot{F}(\beta_k)}_2 = 0}{\mathcal{F}_0},
\end{align}
which is the desired result. \qed
\end{proof}

\section{Stopping Criteria} \label{section-stopping}
As discussed in \S\ref{section-rigorous}, the key challenges related to rigorous estimated stopping criteria are:
establishing that the underlying deterministic stopping criterion will be triggered;
establishing that the estimated stopping criterion will be triggered;
establishing false positive control;\footnote{Recall that a false positive occurs when the estimated stopping criterion is triggered, but the underlying deterministic stopping criterion, if it could be evaluated, would not have been triggered.} and
establishing false negative control.\footnote{Recall that a false negative occurs when the estimated stopping criterion is not triggered, but the underlying deterministic stopping criterion, if it could be evaluated, would have been triggered.}

We have addressed the first challenge in Corollary \ref{corollary-convergence} when the underlying deterministic stopping criterion relies on the gradient function becoming sufficiently small in norm. Therefore, we now address whether the estimated stopping criterion will be triggered, and the issue of false negative control. We again underscore that while the probability of false positives can be controlled using the techniques discussed below, such bounds would be rather coarse and would not be useful in the typical optimization context where a false positive would be particularly insidious. 

To achieve this, we organize the remainder of this section as follows. In \S\ref{subsection-sc-overview}, we state our two estimated stopping criteria (\ref{stop-mean-est} and \ref{stop-vote-ind}) that correspond to the aforementioned gradient-based deterministic stopping criterion. In \S\ref{subsection-sc-scenarios}, we superficially examine the consequences of the \ref{bcn} conditions for our stopping criteria, which motivate two important specializations that we will consider in addition to the general \ref{bcn} conditions. In \S\ref{subsection-sc-grad-est}, we establish that \ref{stop-mean-est} will be triggered with probability one and establish control over its false negative probability. In \S\ref{subsection-sc-maj-vote}, we establish that \ref{stop-vote-ind} will be triggered with probability one and establish control over its false negative probability. 
\subsection{Overview of Estimated Stopping Criteria} \label{subsection-sc-overview}

Now, we specify the estimated stopping criteria for the underlying deterministic criterion. First, we recall that the underlying deterministic stopping criterion is when the norm of the gradient drops below a given value. Second, before stating these two criteria, we note that a stopping criterion \textit{should not} be evaluated at each iteration in the stochastic setting, but rather ought to be evaluated at select, possibly random, iterations. While we will not specify the selection of these iterations in this work, we will allow for this generality by considering the situation in which the given estimated stopping criterion is evaluated at a strictly increasing sequence of finite stopping times, $\lbrace T_j : j \in \mathbb{N} \rbrace$, with respect to $\lbrace \mathcal{F}_k \rbrace$.

The first estimated stopping criterion directly attempts to estimate the gradient function using independent samples, while the second estimated stopping criterion attempts to estimate the zero-one question posed by the deterministic stopping criterion using independent samples. 

\condition{stop-mean-est}{Let $\epsilon > 0$. Let $\lbrace N_j \rbrace$ be $\mathbb{N}$-valued random variables such that $N_j$ is measurable with respect to $\mathcal{F}_{T_j}$. Moreover, for each $j$, let $\lbrace Z_{ij}: i = 1,\ldots,N_j \rbrace$ be copies of $X$ that are independent of each other and $\lbrace \mathcal{F}_k \rbrace$. Then, the SGD iterates are stopped at iterate $T_J$, where
$$J = \min\left\lbrace j \geq 1 : \frac{1}{N_j} \norm{ \sum_{i=1}^{N_j} \dot f(\beta_{T_j}, Z_{ij})}_2 \leq \epsilon \right\rbrace.$$
}{SC-1}

\condition{stop-vote-ind}{Let $\epsilon > 0$. Let $\lbrace N_j \rbrace$ be $\mathbb{N}$-valued random variables such that $N_j$ is measurable with respect to $\mathcal{F}_{T_j}$. Let $\bar \delta \in (0,1)$ and let $\lbrace \delta_j \rbrace$ be $(0,\bar \delta)$-valued random variables such that $\delta_j$ is measurable with respect to $\mathcal{F}_{T_j}$. Moreover, for each $j$, let $\lbrace Z_{ij}: i = 1,\ldots,N_j \rbrace$ be copies of $X$ that are independent of each other and $\lbrace \mathcal{F}_k \rbrace$. Then, the SGD iterates are stopped at iterate $T_J$, where
$$
J = \min\left\lbrace j \geq 1 : \frac{1}{N_j} \sum_{i=1}^{N_j} \1{ \norm{ \dot f (\beta_{T_j}, Z_{ij})}_2 \leq \epsilon} \geq \delta_j \right\rbrace.
$$}{SC-2}

\begin{remark}
We can readily imagine more efficient, dependent variants of these two stopping criteria that make use of a recent set of historical values of the stochastic gradients used to update $\lbrace \beta_k \rbrace$. Indeed, such stopping criteria would be the most desirable and require an even more complex analysis, which we will leave to future work.
\end{remark}

We now address one final technical point. Given that our estimated stopping criteria are only evaluated at iterations $\lbrace T_j : j \in \mathbb{N} \rbrace$, we need to ensure that the underlying deterministic stopping criterion would be triggered at these iterations in finite time with probability one. Thus, our first step is to specialize Corollary \ref{corollary-convergence} to these iterations as follows.

\begin{corollary}\label{corollary-ST-converge}
Let $F$ be a Bottou-Curtis-Nocedal nonconvex function (\S \ref{subsection-bcn-specification}) and let $\lbrace \beta_k \rbrace$ be the iterates generated by Stochastic Gradient Descent satisfying \ref{prop-sympd} to \ref{prop-condnum} (\S \ref{subsection-sgd}). If $\lbrace T_j \rbrace$ are positive-valued, strictly increasing, finite stopping times with respect to $\lbrace \mathcal{F}_k \rbrace$, then
\begin{equation}
\condPrb{ \lim_{j \to \infty} \norm{ \dot{F}(\beta_{T_j}) }_2 = 0 }{\mathcal{F}_0} = 1, ~\text{with probability 1},
\end{equation}
where $\mathcal{F}_0 = \sigma(\beta_0)$. Therefore, for any $\epsilon > 0$, there exist a stopping time $J^*$ that is finite with probability one such that 
$T_{J^*}$ is finite with probability one, and
\begin{equation}
\condPrb{ \norm{ \dot{F}(\beta_{T_{J^*}})}_2 \leq \epsilon }{\mathcal{F}_0} = 1, ~ \text{with probability 1}.
\end{equation}
\end{corollary}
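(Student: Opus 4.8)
The plan is to derive both assertions directly from Corollary \ref{corollary-convergence} by treating $\lbrace \beta_{T_j} \rbrace$ as a (random) subsequence of $\lbrace \beta_k \rbrace$. For the first assertion, I would fix the event $A = \lbrace \lim_{k \to \infty} \norm{ \dot F(\beta_k)}_2 = 0 \rbrace$, whose conditional probability given $\mathcal{F}_0$ equals one with probability one by Corollary \ref{corollary-convergence}. The only pathwise fact needed is that $T_j \to \infty$: since each $T_j$ is $\mathbb{N}$-valued and finite with probability one, and since $\lbrace T_j \rbrace$ is positive-valued and strictly increasing, on the probability-one event where all $T_j$ are finite one has $T_j \geq j$, hence $T_j \to \infty$. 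Because convergence of a real sequence forces convergence of every subsequence to the same limit, on $A$ the subsequence $\norm{ \dot F(\beta_{T_j})}_2$ also tends to zero. This yields the event inclusion $A \subseteq \lbrace \lim_{j \to \infty} \norm{ \dot F(\beta_{T_j})}_2 = 0 \rbrace$, and monotonicity of conditional probability gives $\condPrb{ \lim_{j} \norm{ \dot F(\beta_{T_j})}_2 = 0}{\mathcal{F}_0} \geq \condPrb{A}{\mathcal{F}_0} = 1$ with probability one, proving the first display.

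For the second assertion, I would explicitly construct the candidate
\begin{equation}
J^* = \min\left\lbrace j \geq 1 : \norm{ \dot F(\beta_{T_j})}_2 \leq \epsilon \right\rbrace,
\end{equation}
working with the filtration $\lbrace \mathcal{F}_{T_j} \rbrace_{j \in \mathbb{N}}$. Two points must be checked. First, $J^*$ is a stopping time: for each $i$, the iterate $\beta_{T_i}$ is $\mathcal{F}_{T_i}$-measurable and $\dot F$ is a fixed Borel function, so $\norm{ \dot F(\beta_{T_i})}_2$ is $\mathcal{F}_{T_i}$-measurable; since $\mathcal{F}_{T_i} \subseteq \mathcal{F}_{T_j}$ for $i \leq j$, the event $\lbrace J^* \leq j \rbrace$ is $\mathcal{F}_{T_j}$-measurable. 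Second, $J^*$ is finite with probability one: by the first assertion $\norm{ \dot F(\beta_{T_j})}_2 \to 0$ on a probability-one event, so the threshold $\epsilon$ is eventually undercut and the minimum is attained. It then follows that $T_{J^*}$ is finite with probability one (a finite index into a sequence of almost-surely finite stopping times), and, directly from the first-hitting-time definition of $J^*$, the bound $\norm{ \dot F(\beta_{T_{J^*}})}_2 \leq \epsilon$ holds on $\lbrace J^* < \infty \rbrace$, hence with probability one.

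I do not expect a genuine obstacle, since the substantive content is already carried by Corollary \ref{corollary-convergence} and the task here is careful bookkeeping. The most delicate point is the passage from the pathwise subsequence argument to the conditional-on-$\mathcal{F}_0$ statement: one must remember that each displayed ``probability one'' is itself an almost-sure statement about the $\mathcal{F}_0$-measurable conditional probability, so the event inclusion must be combined with conditional monotonicity, and the measurability of $J^*$ and $T_{J^*}$ must be tracked so that the terminal conditional probability is well defined. These are routine once the subsequence-convergence observation and the fact $T_j \to \infty$ are in hand.
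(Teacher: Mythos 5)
Your proposal is correct and follows essentially the same route as the paper: both parts reduce to Corollary \ref{corollary-convergence} via the observation that the strictly increasing, a.s.-finite $T_j$ tend to infinity (so the subsequence inherits the limit), and the paper likewise takes $J^*$ to be the first $j$ with $\norm{\dot F(\beta_{T_j})}_2 \leq \epsilon$ and deduces finiteness of $T_{J^*}$ by decomposing over $\lbrace J^* = j\rbrace$. The paper merely writes the subsequence step as an explicit chain of set inclusions involving $\sup_{k \geq j-1}\norm{\dot F(\beta_k)}_2$, which your more direct pathwise argument replaces without loss.
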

\begin{proof}
Note, $\lbrace T_j \rbrace$ are strictly increasing, which implies $T_j \geq j - 1$ for all $j$. Therefore, for any $j \in \mathbb{N}$ and $\epsilon > 0$,
\begin{align}
&\bigcap_{m = j}^\infty \left\lbrace \norm{ \dot F (\beta_{T_m} ) }_2 \leq \epsilon \right\rbrace  \\
&\quad= \bigcap_{m = j}^\infty \left( \bigcup_{k=m-1}^\infty \left\lbrace \norm{ \dot F (\beta_k ) }_2 \leq \epsilon \right\rbrace \cap \left\lbrace T_m = k \right\rbrace  \right) \\
&\quad\supset \bigcap_{m = j}^\infty \left( \bigcup_{k=m-1}^\infty \left\lbrace \sup_{k \geq j -1} \norm{ \dot{F}(\beta_k) }_2 \leq \epsilon \right\rbrace \cap \left\lbrace T_m = k \right\rbrace \right) \\
&\quad = \bigcap_{m=j}^\infty \left[ \left\lbrace \sup_{k \geq j -1} \norm{ \dot{F}(\beta_k) }_2 \leq \epsilon \right\rbrace \cap \left( \bigcup_{k=m-1}^\infty \lbrace T_m = k \rbrace  \right) \right] \\
&\quad = \bigcap_{m=j}^\infty  \left\lbrace \sup_{k \geq j -1} \norm{ \dot{F}(\beta_k) }_2 \leq \epsilon \right\rbrace \cap \lbrace T_m < \infty \rbrace  \\
&\quad = \left\lbrace \sup_{k \geq j -1} \norm{ \dot{F}(\beta_k) }_2 \leq \epsilon \right\rbrace \cap \left(  \bigcap_{m=j}^\infty  \lbrace T_m < \infty \rbrace \right).
\end{align}
Since $T_m$ are assumed to be finite stopping times with probability one, we conclude
\begin{equation}
\condPrb{ \sup_{T_m \geq j-1} \norm{ \dot{F}(\beta_{T_m}) }_2 \leq \epsilon }{\mathcal{F}_0} \geq \condPrb{ \sup_{k \geq j-1} \norm{ \dot F (\beta_k) }_2 \leq \epsilon}{\mathcal{F}_0}.
\end{equation}
By Corollary \ref{corollary-convergence}, the limit of the left hand side converges to $1$ as $j \to \infty$. Since $\epsilon > 0$ is arbitrary, the first part of the result follows.

For the second part, recall that $T_j$ are finite with probability one for all $j$. Therefore,
\begin{equation}
\begin{aligned}
\condPrb{ T_{J^*} < \infty}{\mathcal{F}_0} &= \sum_{j=0}^\infty \condPrb{ T_j < \infty}{J^* = j, \mathcal{F}_0} \condPrb{J^* = j}{\mathcal{F}_0} \\
& = \sum_{j=0}^\infty \condPrb{J^*=j}{\mathcal{F}_0} = \condPrb{ J^* < \infty}{\mathcal{F}_0}.
\end{aligned}
\end{equation}
Hence, it is enough to prove that $\condPrb{ J^* < \infty}{\mathcal{F}_0} = 1$. Note,
\begin{align}
\condPrb{ J^* < \infty }{\mathcal{F}_0} &= \condPrb{ \bigcup_{j} \left\lbrace \norm{ \dot{F}(\beta_{T_j})}_2 \leq \epsilon \right\rbrace }{\mathcal{F}_0} \\
&\geq \condPrb{ \limsup_{j \to \infty} \norm{ \dot{F}(\beta_{T_j})}_2 \leq \epsilon}{\mathcal{F}_0},
\end{align}
where we have already shown that the right hand side has probability one. \qed
\end{proof}
\subsection{Consequences of the BCN Conditions and Specializations} \label{subsection-sc-scenarios}

For a \ref{bcn} function, we will readily be able to establish that \ref{stop-mean-est} will be triggered with probability one. We will refer to the case where we consider a function satisfying \ref{bcn} as Scenario (a). Unfortunately, we will have some difficulty with this general scenario for \ref{stop-vote-ind} if the stochastic gradients are concentrated away from their mean, which happens for the \ref{bcn} function induced by $f(\theta,X) = \theta X$, where $X$ is a Rademacher random variable. 

For this reason, we will consider two specializations of \ref{bcn}. The first specialization is the case when $C_1 = 0$ in the definition of \ref{condition-noise-grad}. This specialization corresponds to the important case of over-parametrized models, and occurs when the minimizer of $F$ is also a minimizer of all $f$ with probability one \cite{bassily2018}. This first specialization will be referred to as Scenario (b).

For the final specialization of \ref{bcn}, which we refer to as Scenario (c), we consider the following additional condition.
\condition{condition-tail-pareto}{Let $\pi_1, \pi_2 \in (0,1)$ and $\pi_3 \geq 1$. For $\norm{\dot{F}(\theta)}_2 \leq \pi_1$ and for any $t \geq \pi_3 \norm{\dot{F}(\theta)}_2$,
$$\Prb{ \norm{ \dot{f}(\theta,X)}_2 \geq t} \leq 
\left(\frac{\pi_3 \norm{ \dot{F}(\theta)}_2}{t}\right)^{\pi_2} 
$$}{T1}
The role of \ref{condition-tail-pareto} is to ensure that there is a concentration of the norm of the stochastic gradient function near small values when the deterministic gradient is sufficiently small, which would avoid the issues that occur with $f(\theta,X) = \theta X$, where $X$ is a Rademacher random variable. Moreover, we note that the general \ref{bcn} conditions do not imply \ref{condition-tail-pareto}, as illustrated by $f(\theta,X) = \theta X$. Conversely, \ref{condition-tail-pareto} does not imply \ref{bcn}, as a random variable that satisfies \ref{condition-tail-pareto} is not even guaranteed to have a mean, leave alone a second moment (i.e., \ref{condition-noise-grad}).

To summarize, Scenario (a) refers to objective functions satisfying the general \ref{bcn} conditions; Scenario (b) refers to objective functions satisfying the \ref{bcn} conditions with $C_1 = 0$ in \ref{condition-noise-grad}; and Scenario (c) refers to objective functions satisfying \ref{bcn} and \ref{condition-tail-pareto}. 

\subsection{Stopping Criterion by Gradient Estimation} \label{subsection-sc-grad-est}
We now establish that \ref{stop-mean-est} will be triggered with probability one and we derive bounds on the probability of a false negative. We begin with some general consequences of any effort to estimate the mean for the three aforementioned scenarios.

\begin{lemma} \label{lemma-prob-bound}
Let $F$ be a \ref{bcn} function (\S \ref{subsection-bcn-specification}). For $N \in \mathbb{N}$, let $\lbrace Z_1,\ldots,Z_N \rbrace$ be independent copies of $X$. Then for $\epsilon > 0$,
\begin{equation}
\begin{aligned}
&\Prb{ \frac{1}{N} \norm{ \sum_{i=1}^N \dot f(\theta, Z_i) }_2 \leq \epsilon } 
\geq 1 - \frac{C_1 + (C_2+N) \norm{\dot{F}(\theta)}_2^2}{N \epsilon^2} .
\end{aligned}
\end{equation}
Moreover, if \ref{condition-tail-pareto} holds and $\norm{ \dot{F}(\theta)}_2 \leq \pi_1$, then, for $\epsilon \geq \pi_3 \norm{ \dot{F}(\theta)}_2$,
\begin{equation}
\Prb{ \frac{1}{N} \norm{ \sum_{i=1}^N \dot f(\theta, Z_i) }_2 \leq \epsilon} \geq 
1 - N \left( \frac{\pi_3 \norm{ \dot{F}(\theta)}_2}{\epsilon} \right)^{\pi_2}.
\end{equation}
\end{lemma}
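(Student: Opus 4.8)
The plan is to prove the two inequalities separately, handling the first with a second-moment (Markov) argument that exploits unbiasedness and \ref{condition-noise-grad}, and the second with a union bound fed directly by the tail assumption \ref{condition-tail-pareto}. Throughout I would abbreviate $Y_i := \dot f(\theta, Z_i)$ and note that, by hypothesis, the $Y_i$ are i.i.d.\ copies of $\dot f(\theta, X)$ with common mean $\E{Y_i} = \dot F(\theta)$.

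For the first inequality, I would apply Markov's inequality to the squared norm of the sum, giving $\Prb{ \frac{1}{N}\norm{\sum_{i=1}^N Y_i}_2 > \epsilon } \leq \frac{1}{N^2\epsilon^2}\E{\norm{\sum_{i=1}^N Y_i}_2^2}$. The central computation is then to expand $\E{\norm{\sum_{i=1}^N Y_i}_2^2} = \sum_{i,j}\E{Y_i'Y_j}$, use independence to factor each off-diagonal term as $\E{Y_i}'\E{Y_j} = \norm{\dot F(\theta)}_2^2$, and bound each diagonal term through the bias-variance split $\E{\norm{Y_i}_2^2} = \E{\norm{Y_i - \dot F(\theta)}_2^2} + \norm{\dot F(\theta)}_2^2$, to which \ref{condition-noise-grad} applies to yield $\E{\norm{Y_i}_2^2} \leq C_1 + (C_2+1)\norm{\dot F(\theta)}_2^2$. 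Collecting the $N$ diagonal contributions and the $N(N-1)$ cross contributions produces $\E{\norm{\sum_{i=1}^N Y_i}_2^2} \leq N[C_1 + (C_2+N)\norm{\dot F(\theta)}_2^2]$; dividing by $N^2\epsilon^2$ and passing to the complement gives the claimed bound.

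For the second inequality, I would deliberately avoid moments (since \ref{condition-tail-pareto} need not even furnish a finite second moment) and instead use a deterministic set inclusion: by the triangle inequality, whenever every summand satisfies $\norm{Y_i}_2 \leq \epsilon$ one has $\frac{1}{N}\norm{\sum_{i=1}^N Y_i}_2 \leq \epsilon$, so $\lbrace \frac{1}{N}\norm{\sum_{i=1}^N Y_i}_2 > \epsilon \rbrace \subseteq \bigcup_{i=1}^N \lbrace \norm{Y_i}_2 > \epsilon \rbrace$. A union bound then yields $\Prb{ \frac{1}{N}\norm{\sum_{i=1}^N Y_i}_2 > \epsilon } \leq N\,\Prb{ \norm{\dot f(\theta,X)}_2 > \epsilon }$, and since the standing hypotheses $\norm{\dot F(\theta)}_2 \leq \pi_1$ and $\epsilon \geq \pi_3\norm{\dot F(\theta)}_2$ are exactly what is required to invoke \ref{condition-tail-pareto} at $t = \epsilon$, taking complements delivers the second bound.

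The main obstacle, though a modest one, is the algebraic bookkeeping in the second-moment expansion: one must apply \ref{condition-noise-grad} only after separating noise from mean via the bias-variance identity, and then verify that the $N(C_2+1)\norm{\dot F(\theta)}_2^2$ contribution from the diagonal combines with the $N(N-1)\norm{\dot F(\theta)}_2^2$ contribution from the cross terms to collapse precisely to the coefficient $N(C_2+N)$ that appears in the statement. The only delicate point in the second part is confirming that the hypotheses are exactly those needed to apply \ref{condition-tail-pareto} with $t=\epsilon$; no concentration, martingale structure, or independence beyond the union bound is required.
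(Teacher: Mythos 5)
Your proposal is correct and follows essentially the same route as the paper: Markov's inequality on the second moment combined with \ref{condition-noise-grad} for the first bound, and a triangle-inequality set inclusion plus union bound feeding into \ref{condition-tail-pareto} for the second. The paper's proof is terser, but your expansion of the cross terms and the bias--variance split is exactly the computation it leaves implicit, and your coefficient bookkeeping ($N(C_2+1)+N(N-1)=N(C_2+N)$) matches the stated bound.
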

\begin{proof}
In both cases, we will find the upper bound for the complement, from which a lower bound for the stated event is readily derived. For the first case, by Markov's inequality and \ref{condition-noise-grad},
\begin{align}
\Prb{ \frac{1}{N} \norm{ \sum_{i=1}^N \dot f(\theta, Z_i) }_2 > \epsilon } & \leq \frac{\E{ \norm{ \frac{1}{N} \sum_{i=1}^{N} \dot f(\theta,Z_i)  }_2^2 } }{\epsilon^2} \\
&\leq \frac{C_1 + (C_2+N)\norm{\dot F(\theta)}_2^2 }{N\epsilon^2}.
\end{align} 
For the second case, if $\epsilon \geq \pi_3 \norm{ \dot{F}(\theta)}_2$, then
\begin{align}
\Prb{ \frac{1}{N} \norm{ \sum_{i=1}^N \dot f(\theta, Z_i) }_2 > \epsilon } & \leq N \Prb{ \norm{ \dot f(\theta,Z_1) }_2 > \epsilon},
\end{align}
to which we apply \ref{condition-tail-pareto}. \qed
\end{proof}

The first part of Lemma \ref{lemma-prob-bound} covers Scenarios (a) and (b), while the second part covers Scenario (c). We discuss each of these scenarios individually and in order. 
With regard to Scenario (a), Lemma \ref{lemma-prob-bound} shows that the main hurdle in establishing a meaningful bound on the probability of a false negative is in choosing $N \epsilon^2$ to be sufficiently large to account for the constant $C_1 > 0$. Thus, for applications where achieving a small bound on the deterministic gradient is necessary, Lemma \ref{lemma-prob-bound} implies that $N$ would need to be larger than the reciprocal squared of this small bound. On the other hand, for applications where achieving a bound on the order of the noise level is acceptable (e.g., $\epsilon \approx \sqrt{C_1}$), then moderate values of $N$ are acceptable. The following result formalizes these concepts.

\begin{proposition} \label{prop-SC-2-detect-fn}
Let $F$ be a \ref{bcn} function (\S \ref{subsection-bcn-specification}) and let $\lbrace \beta_k \rbrace$ be the iterates generated by Stochastic Gradient Descent satisfying \ref{prop-sympd} to \ref{prop-condnum} (\S \ref{subsection-sgd}). Let $\epsilon > 0$. If 
\begin{equation} \label{eqn-ind-scenario-a-sample}
\condPrb{\liminf_{j \to \infty} N_j > \frac{C_1}{\epsilon^2}}{\mathcal{F}_0} = 1 ~\text{with probability one},
\end{equation}
then \ref{stop-mean-est} will be triggered in finite time with probability one. Moreover, for any $\rho \in (0,1)$ and $\gamma > 1$, if
\begin{equation}
N_j > \frac{\gamma C_1 + C_2 ( \epsilon \rho )^2}{(1 - \rho^2) \epsilon^2},
\end{equation}
then
\begin{equation}
\condPrb{ \frac{1}{N_j}\norm{ \sum_{i=1}^{N_j} \dot f(\beta_{T_j},Z_{ij}) }_2 > \epsilon ,~\norm{ \dot{F}(\beta_{T_j})}_2 \leq \frac{\rho \epsilon}{\sqrt{\gamma}} }{\mathcal{F}_{T_j}} \leq \frac{1}{\gamma}.
\end{equation}
\end{proposition}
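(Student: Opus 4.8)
The plan is to reduce both assertions to the per-step bound furnished by Lemma \ref{lemma-prob-bound}, applied conditionally. The crucial structural fact that makes this legitimate is that the evaluation samples $\lbrace Z_{ij} \rbrace$ are drawn fresh and independently of the filtration $\lbrace \mathcal{F}_k \rbrace$, so that conditioning on $\mathcal{F}_{T_j}$ (or on the whole trajectory) fixes $\beta_{T_j}$ and $N_j$ while leaving the $Z_{ij}$ as i.i.d.\ copies of $X$. I would treat the false-negative bound first, since it is essentially the direct computation, and then bootstrap the same per-step estimate into the triggering claim via a conditional independence argument.

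For the false-negative bound, I would first note that $\lbrace \norm{\dot{F}(\beta_{T_j})}_2 \leq \rho\epsilon/\sqrt{\gamma} \rbrace$ is $\mathcal{F}_{T_j}$-measurable, so it factors out of the conditional probability as an indicator. On that event I apply Lemma \ref{lemma-prob-bound} with $\theta = \beta_{T_j}$ and $N = N_j$ to obtain
$$\condPrb{ \frac{1}{N_j}\norm{ \sum_{i=1}^{N_j} \dot{f}(\beta_{T_j},Z_{ij}) }_2 > \epsilon }{\mathcal{F}_{T_j}} \leq \frac{C_1 + (C_2 + N_j)\norm{\dot{F}(\beta_{T_j})}_2^2}{N_j \epsilon^2}.$$
Substituting $\norm{\dot{F}(\beta_{T_j})}_2^2 \leq \rho^2\epsilon^2/\gamma$ and simplifying yields $\frac{C_1}{N_j\epsilon^2} + \frac{C_2\rho^2}{\gamma N_j} + \frac{\rho^2}{\gamma}$. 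The stated lower bound on $N_j$ is precisely the threshold that forces the first two terms to sum to at most $(1-\rho^2)/\gamma$, so the whole expression is at most $1/\gamma$. This part is bookkeeping once the conditioning is justified.

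For the triggering claim I would set $A_j := \lbrace \frac{1}{N_j}\norm{ \sum_{i=1}^{N_j} \dot{f}(\beta_{T_j},Z_{ij}) }_2 \leq \epsilon \rbrace$, so that $\lbrace J = \infty \rbrace = \bigcap_j A_j^c$, and aim to show this has probability zero. Conditioning on the full trajectory $\mathcal{F}_\infty := \sigma(\cup_k \mathcal{F}_k)$ fixes every $\beta_{T_j}$ and $N_j$, and—because distinct $A_j$ use disjoint batches of fresh samples—renders $\lbrace A_j \rbrace$ conditionally independent with $\condPrb{A_j}{\mathcal{F}_\infty} \geq 1 - \frac{C_1 + (C_2 + N_j)\norm{\dot{F}(\beta_{T_j})}_2^2}{N_j\epsilon^2}$ by Lemma \ref{lemma-prob-bound}. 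By Corollary \ref{corollary-ST-converge} we have $\norm{\dot{F}(\beta_{T_j})}_2 \to 0$ almost surely, so the two terms carrying $\norm{\dot{F}(\beta_{T_j})}_2^2$ vanish; combined with the hypothesis $\liminf_j N_j > C_1/\epsilon^2$, this gives $\limsup_j \condPrb{A_j^c}{\mathcal{F}_\infty} \leq \frac{C_1}{\epsilon^2 \liminf_j N_j} < 1$ on a set of full probability.

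On that set the conditional success probabilities are eventually bounded below by a positive constant, so $\sum_j \condPrb{A_j}{\mathcal{F}_\infty} = \infty$; conditional independence then forces $\condPrb{\bigcap_j A_j^c}{\mathcal{F}_\infty} = \prod_j \condPrb{A_j^c}{\mathcal{F}_\infty} = 0$ (equivalently, the conditional second Borel--Cantelli lemma already invoked in this section yields $A_j$ infinitely often). Integrating this conditional statement recovers $\Prb{J < \infty} = 1$. I expect the main obstacle to be exactly this last step: I must verify that the fresh-sample construction genuinely delivers conditional independence of $\lbrace A_j \rbrace$ given the trajectory, so that the infinite product collapses to zero, and that $\norm{\dot{F}(\beta_{T_j})}_2 \to 0$ together with $\liminf_j N_j > C_1/\epsilon^2$ hold simultaneously on one probability-one event. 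This is where Corollary \ref{corollary-ST-converge} and the sampling hypothesis \eqref{eqn-ind-scenario-a-sample} must be combined rather than used separately.
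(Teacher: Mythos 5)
Your proposal is correct and follows essentially the same route as the paper: both halves reduce to Lemma \ref{lemma-prob-bound} applied conditionally at $\theta = \beta_{T_j}$, the false-negative bound is the same indicator-factoring plus arithmetic, and the triggering claim combines Corollary \ref{corollary-ST-converge} with the hypothesis on $\liminf_j N_j$ to get an eventual uniform positive lower bound on the per-evaluation success probability, then uses independence of the fresh sample batches across $j$ to kill $\bigcap_j A_j^c$. The only cosmetic difference is packaging—the paper introduces explicit finite stopping times $\bar{J}$, $J^*$ and a geometric-tail argument where you condition on $\mathcal{F}_\infty$ and collapse an infinite product—but these are the same argument.
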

\begin{proof}
When $C_1 = 0$, the proof of the result uses analogous reasoning to the case when $C_1 > 0$; therefore, we will take $C_1 > 0$. 
By \eqref{eqn-ind-scenario-a-sample}, there exist $\delta > 1$ and a finite stopping time, $\bar{J}$, such that for all $j \geq \bar{J}$, with probability one,
\begin{equation}
N_j \geq \frac{\delta C_1}{\epsilon^2}.
\end{equation}
Moreover, by Corollary \ref{corollary-ST-converge}, for $\delta' \in (1, \delta)$, there exists a finite stopping time $J^*$ such that for all $j \geq J^*$, with probability one,
\begin{equation}
\frac{(\delta' - 1) C_1}{C_2 + \delta C_1 / \epsilon^2 } > \norm{ \dot F(\beta_{T_j}) }_2^2.
\end{equation}
Therefore, for all $j \geq \max \lbrace J^*, \bar{J} \rbrace$, with probability one, 
\begin{align}
\frac{C_1 + (C_2 + N_j) \norm{ \dot F (\beta_{T_j} ) }_2^2}{N_j \epsilon^2} & 
< \frac{C_1 + (C_2 + \delta C_1/\epsilon^2)\norm{ \dot F (\beta_{T_j} ) }_2^2}{\delta C_1} \\
&<\frac{C_1 + (\delta' - 1) C_1}{\delta C_1}.
\end{align}
Therefore, for all $j \geq \max \lbrace J^*, \bar{J} \rbrace$, Lemma \ref{lemma-prob-bound} implies
\begin{equation}
\condPrb{\frac{1}{N_j}\norm{ \sum_{i=1}^{N_j} \dot f(\beta_{T_j},Z_{ij}) }_2 \leq \epsilon }{\mathcal{F}_{T_j}} 
\geq 1 - \frac{\delta'}{\delta} > 0.
\end{equation}
Given that $\lbrace Z_{ij} \rbrace$ are independent over $i$ and $j$ and recall that $J$ is defined in \ref{stop-mean-est}, the probability that $J > j$ is controlled by a geometric distribution for $j \geq \max \lbrace J^*, \bar{J} \rbrace$. Therefore, we conclude that $J$ is finite with probability one: that is, \ref{stop-mean-est} is triggered in finite time with probability one. 

For the second part of the result, note
\begin{equation}
\begin{aligned}
& \condPrb{ \frac{1}{N_j}\norm{ \sum_{i=1}^{N_j} \dot f(\beta_{T_j},Z_{ij}) }_2 > \epsilon ,~\norm{ \dot{F}(\beta_{T_j})}_2 \leq \frac{\rho \epsilon}{\sqrt{\gamma}} }{\mathcal{F}_{T_j}} \\
&= \condPrb{ \frac{1}{N_j}\norm{ \sum_{i=1}^{N_j} \dot f(\beta_{T_j},Z_{ij}) }_2 > \epsilon }{\mathcal{F}_{T_j}} \1{\norm{ \dot{F}(\beta_{T_j})}_2 \leq \frac{\rho \epsilon}{\sqrt{\gamma}} }. 
\end{aligned}
\end{equation}
Applying Lemma \ref{lemma-prob-bound} with $\Vert \dot F ( \beta_{T_j} ) \Vert_2 \leq \rho \epsilon / \sqrt{\gamma}$ and the lower bound on $N_j$ implies the bound on the false negative probability.
\qed
\end{proof}

Clearly, Proposition \ref{prop-SC-2-detect-fn} applies to Scenario (b) which, we recall, is the special case of Scenario (a) in which $C_1 = 0$. Here, we see that even with $N_j = 1$ for all $j$, Proposition \ref{prop-SC-2-detect-fn} implies that \ref{stop-mean-est} would be triggered in finite time (though the false positive control would likely be quite poor). Similarly, Proposition \ref{prop-SC-2-detect-fn} allows for controlling the false negative probability at $1/\gamma$ so long as
\begin{equation}
N_j > \frac{C_2 \rho^2}{1 - \rho^2}, ~ \rho \in (0,1).
\end{equation}

With regard to Scenario (c), Lemma \ref{lemma-prob-bound} demonstrates that having more samples does not help in the case where \ref{condition-tail-pareto} holds; in fact, it actually makes the situation worse. This is expected as such a fat-tailed distribution has a population mean of infinity, which, intuitively, would make a sample mean entirely useless. Thus, considering \ref{condition-tail-pareto} for \ref{stop-mean-est} does not provide any additional value, and we will not pursue it further.

To summarize, under the general \ref{bcn} conditions and Scenario (b), Proposition \ref{prop-SC-2-detect-fn} shows that \ref{stop-mean-est} will be triggered in finite time for $N_j$ sufficiently large, and provides control over the false negative probability.

\subsection{Stopping Criterion by Majority Vote} \label{subsection-sc-maj-vote}

We now establish that \ref{stop-vote-ind} will be triggered with probability one and we derive bounds on the probability of a false negative. Just as we did in \S \ref{subsection-sc-grad-est}, we begin with a general result that will guide our specific analysis of \ref{stop-vote-ind}.

\begin{lemma}\label{lemma-prob-conc}
Let $F$ be a Bottou-Curtis-Nocedal nonconvex function (\S \ref{subsection-bcn-specification}). For $N \in \mathbb{N}$, let $\lbrace Z_1,\ldots,Z_N \rbrace$ be independent copies of $X$. Let $\epsilon > 0$ and $\delta \in (0,1)$ and define
\begin{equation}
\Delta = \delta - \Prb{ \norm{ \dot{f}(\theta,X) }_2 \leq \epsilon }.
\end{equation}
When $\Delta < 0$,
\begin{equation}
\Prb{ \frac{1}{N} \sum_{i=1}^{N} \1{ \norm{ \dot f (\theta, Z_i) }_2 \leq \epsilon } \geq \delta} \geq 1 - \exp\left( -2 N \Delta^2 \right).
\end{equation}
\end{lemma}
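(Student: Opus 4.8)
The plan is to recognize the left-hand side as a statement about a sample average of i.i.d.\ Bernoulli random variables and to apply Hoeffding's inequality. First I would set $Y_i = \1{\norm{\dot f(\theta, Z_i)}_2 \leq \epsilon}$ for $i = 1,\ldots,N$. Since $\lbrace Z_i \rbrace$ are independent copies of $X$, the $\lbrace Y_i \rbrace$ are i.i.d.\ $\lbrace 0,1 \rbrace$-valued random variables with common mean $p := \E{Y_1} = \Prb{\norm{\dot f(\theta,X)}_2 \leq \epsilon}$. In this notation $\Delta = \delta - p$, so the hypothesis $\Delta < 0$ is precisely the statement that the threshold $\delta$ lies strictly below the mean $p$ of the sample average $\bar Y := \frac{1}{N}\sum_{i=1}^N Y_i$.

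Next I would pass to the complementary event and rewrite it as a lower-tail deviation from the mean. We have $\Prb{\bar Y \geq \delta} = 1 - \Prb{\bar Y < \delta} \geq 1 - \Prb{\bar Y \leq \delta}$, and $\Prb{\bar Y \leq \delta} = \Prb{\bar Y - p \leq \Delta}$. Because $\Delta < 0$, writing $\Delta = -\lvert \Delta \rvert$ exhibits this as the probability that $\bar Y$ falls at least $\lvert \Delta \rvert$ below its mean, i.e.\ a genuine lower-tail large-deviation event to which the standard concentration bound applies.

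Finally I would invoke Hoeffding's inequality: since each $Y_i \in [0,1]$, we have $\Prb{\bar Y - p \leq -t} \leq \exp(-2 N t^2)$ for every $t > 0$. Taking $t = \lvert \Delta \rvert = -\Delta$ gives $\Prb{\bar Y \leq \delta} \leq \exp(-2N\Delta^2)$, and substituting back into the inequality from the previous step yields the claimed bound. There is no serious analytic obstacle here; the only point requiring care is the sign bookkeeping, and in particular recognizing why the hypothesis $\Delta < 0$ is essential. It guarantees that $\lbrace \bar Y \geq \delta \rbrace$ is the high-probability event whose complement decays exponentially in $N$; were $\Delta \geq 0$ instead, the event $\lbrace \bar Y \geq \delta \rbrace$ would itself be the rare event, and Hoeffding would upper bound rather than lower bound its probability, rendering the stated inequality vacuous.
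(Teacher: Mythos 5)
Your proof is correct and follows essentially the same route as the paper: the paper applies McDiarmid's bounded-differences inequality to the empirical average of the indicators, which in this setting of i.i.d.\ bounded summands reduces exactly to the Hoeffding bound you invoke, and both arguments hinge on the same sign observation that $\Delta<0$ makes $\lbrace \bar Y \geq \delta\rbrace$ the high-probability event. No gaps.
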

\begin{proof}
The proof leverages McDiarmid's inequality (see \S 3 of \cite{mcdiarmid1998}). Let the range of $X$ be denoted by $\mathcal{X}$. Let $z_1,\ldots,z_N \in \mathcal{X}$ and define
\begin{equation}
h(z_1,\ldots,z_N) = \frac{1}{N}\sum_{i=1}^N  \1{ \norm{ \dot f (\theta, z_i) }_2 \leq \epsilon }.
\end{equation}
Then, for any $j \in \lbrace 1,\ldots,N \rbrace$ and $z_1,\ldots,z_N,z_j' \in \mathcal{X}$,
\begin{equation}
| h(z_1,\ldots,z_N) - h(z_1,\ldots,z_{j}',\ldots,z_N)| \leq \frac{1}{N}.
\end{equation}
Since
\begin{equation}
\E{ h(Z_1,\ldots,Z_N) } = \Prb{ \norm{ \dot f (\theta, Z_1)}_2 \leq \epsilon},
\end{equation}
McDiarmid's inequality implies
\begin{align}
&\Prb{ h(Z_1,\ldots,Z_N) < \delta } \nonumber \\
&\quad = \Prb{ h(Z_1,\ldots,Z_N) - \E{ h(Z_1,\ldots,Z_N)} < \Delta } \\
&\quad \leq \exp( - 2N \Delta^2).
\end{align}
By computing the complement, the result follows. \qed
\end{proof}

If we compare Lemma \ref{lemma-prob-conc} to Lemma \ref{lemma-prob-bound}, we get two very different controls on the triggering probability: in the former, we have an exponential control in $N$, while in the latter we have sublinear control in $N$. As a result, increases in $N$ offer a dramatically greater benefit for \ref{stop-vote-ind} over \ref{stop-mean-est}, and this phenomenon underlies the famous theorem of machine learning referred to as ``classification is easier than regression'' \cite[Theorem 6.5]{devroye2013}. However, this benefit comes at a slight loss in generality: Lemma \ref{lemma-prob-conc} requires that $\mathbb{P}[ \Vert \dot f (\theta, X) \Vert_2 \leq \epsilon ]$ exceeds $\delta$, which may not hold for arbitrary values of $\epsilon > 0$, or for general \ref{bcn} functions such as the one induced by $f(\theta,X) = \theta X$ with $X$ as a Rademacher variable.

If we can allow for $\epsilon > 0$ to be large (e.g., $\epsilon \gg \sqrt {C_1}$), then we can account for general \ref{bcn} functions. However, if we cannot, then we must restrict our analysis to the situation where $\dot f(\theta,X)$ has some concentration near $\dot F (\theta)$, which is precisely induced in Scenarios (b) and (c). Therefore, we will restrict our attention to Scenarios (b) and (c), and we note that if $\epsilon$ is allowed to be large than Scenario (a) can be accounted for analogously to Scenario (b). We begin with proving that \ref{stop-vote-ind} will be triggered in finite time under Scenarios (b) and (c).

\begin{proposition}
Let $F$ be a \ref{bcn} function (\S \ref{subsection-bcn-specification}) with either $C_1 = 0$ in \ref{condition-noise-grad} or \ref{condition-tail-pareto}; and let $\lbrace \beta_k \rbrace$ be the iterates generated by Stochastic Gradient Descent satisfying \ref{prop-sympd} to \ref{prop-condnum} (\S \ref{subsection-sgd}). Let $\epsilon > 0$. Then \ref{stop-vote-ind} is triggered in finite time with probability one.
\end{proposition}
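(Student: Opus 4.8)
The plan is to reduce the claim to an independence-plus-geometric argument of exactly the type used in Proposition~\ref{prop-SC-2-detect-fn}: I would show that, for all sufficiently large $j$, the conditional probability (given $\mathcal{F}_{T_j}$) that the vote test at $T_j$ succeeds is bounded below by a fixed positive constant, and then exploit the fact that the sample batches $\lbrace Z_{ij} \rbrace$ are independent across $j$ and independent of $\lbrace \mathcal{F}_k \rbrace$ to conclude that a test must succeed in finite time with probability one. The bridge to Lemma~\ref{lemma-prob-conc} is the quantity $\Delta_j = \delta_j - \condPrb{ \norm{\dot f(\beta_{T_j},X)}_2 \leq \epsilon}{\mathcal{F}_{T_j}}$; since $\lbrace Z_{ij} \rbrace$ is independent of $\mathcal{F}_{T_j}$, the McDiarmid argument behind Lemma~\ref{lemma-prob-conc} applies verbatim conditionally on $\mathcal{F}_{T_j}$ with $\theta = \beta_{T_j}$, giving a conditional success probability of at least $1 - \exp(-2 N_j \Delta_j^2)$ whenever $\Delta_j < 0$.

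The crux, and the step I expect to be the main obstacle, is to show that $\condPrb{ \norm{\dot f(\beta_{T_j},X)}_2 \leq \epsilon}{\mathcal{F}_{T_j}} \to 1$ as $j \to \infty$, since this is what forces $\Delta_j$ to be eventually negative and bounded away from zero. By Corollary~\ref{corollary-ST-converge}, $\norm{\dot F(\beta_{T_j})}_2 \to 0$ with probability one, so it suffices to bound this conditional probability below by a function of $\norm{\dot F(\beta_{T_j})}_2$ that tends to $1$ as the gradient norm tends to $0$. In Scenario (b) ($C_1 = 0$), I would use $\norm{\dot f(\theta,X)}_2 \leq \norm{\dot f(\theta,X) - \dot F(\theta)}_2 + \norm{\dot F(\theta)}_2$; then for $\norm{\dot F(\theta)}_2 \leq \epsilon/2$, Chebyshev's inequality together with \ref{condition-noise-grad} (with $C_1 = 0$) gives $\Prb{ \norm{\dot f(\theta,X)}_2 \leq \epsilon} \geq 1 - 4 C_2 \norm{\dot F(\theta)}_2^2 / \epsilon^2$. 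In Scenario (c) I would instead apply \ref{condition-tail-pareto} directly with $t = \epsilon$: once $\norm{\dot F(\theta)}_2 \leq \min\lbrace \pi_1, \epsilon/\pi_3 \rbrace$, the tail bound yields $\Prb{ \norm{\dot f(\theta,X)}_2 \leq \epsilon} \geq 1 - (\pi_3 \norm{\dot F(\theta)}_2 / \epsilon)^{\pi_2}$. In both cases the right-hand side converges to $1$ as $\norm{\dot F(\theta)}_2 \to 0$, so substituting $\theta = \beta_{T_j}$ delivers the desired conditional convergence.

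With this estimate in hand I would finish as follows. On the probability-one event that $\norm{\dot F(\beta_{T_j})}_2 \to 0$, there is an almost surely finite (trajectory-dependent) index $J_0$ past which $\condPrb{ \norm{\dot f(\beta_{T_j},X)}_2 \leq \epsilon}{\mathcal{F}_{T_j}} > (1+\bar\delta)/2$; since $\delta_j < \bar\delta$, this forces $\Delta_j < -(1-\bar\delta)/2$ for all $j \geq J_0$. Using $N_j \geq 1$, Lemma~\ref{lemma-prob-conc} then bounds the conditional success probability of the $j$th test below by the constant $p_0 := 1 - \exp(-(1-\bar\delta)^2/2) > 0$, independent of $j$. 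Finally, conditioning on the entire trajectory $\lbrace \mathcal{F}_k \rbrace$ (which determines every $\beta_{T_j}$, $N_j$, and $\delta_j$), the tests at distinct $j$ depend on the disjoint, mutually independent batches $\lbrace Z_{ij} \rbrace$ and are therefore conditionally independent; hence the conditional probability that no test succeeds after $J_0$ is at most $\prod_{j \geq J_0}(1 - p_0) = 0$. Taking expectation over trajectories gives $\Prb{ J = \infty} = 0$, that is, \ref{stop-vote-ind} is triggered in finite time with probability one. The only delicate bookkeeping is that $J_0$ and the constants in the first estimate depend on the random $\beta_{T_j}$, which is handled cleanly precisely because the auxiliary samples are independent of the whole filtration.
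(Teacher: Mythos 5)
Your proposal is correct and follows essentially the same route as the paper: invoke Corollary \ref{corollary-ST-converge} to make $\Vert \dot F(\beta_{T_j})\Vert_2$ eventually small, deduce that the single-sample probability $\condPrb{\Vert \dot f(\beta_{T_j},X)\Vert_2 \leq \epsilon}{\mathcal{F}_{T_j}}$ eventually exceeds a fixed level above $\bar\delta$ (your direct Chebyshev/tail bounds are just the $N=1$ case of Lemma \ref{lemma-prob-bound}), apply Lemma \ref{lemma-prob-conc} with $N_j \geq 1$ to get a uniform positive conditional triggering probability, and finish with the conditional-independence/geometric argument. The only differences are cosmetic choices of constants and a slightly more explicit write-up of the final step.
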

\begin{proof}
Let $\delta' \in (\bar \delta, 1)$, where $\bar \delta$ is defined in \ref{stop-vote-ind}. Now, by Corollary \ref{corollary-ST-converge}, there exists a finite random variable $J^*$ defined for each scenario such that for all $j \geq J^*$:
\begin{enumerate}
\item under Scenario (b), $\Vert \dot F ( \beta_{T_j} ) \Vert_2^2 < (1 - \delta') \epsilon^2 / (C_2 + 1)$; or
\item under Scenario (c), $\Vert \dot F ( \beta_{T_j} ) \Vert_2^2 < \epsilon (1 - \delta')^{1/\pi_2} / \pi_3$.
\end{enumerate}
Then, by Lemma \ref{lemma-prob-bound} (with $N=1$), under both scenarios, for $j \geq J^*$, with probability one, 
\begin{equation}
\condPrb{ \norm{ \dot f(\beta_{T_j}, Z_{ij}) }_2 \leq \epsilon }{\mathcal{F}_{T_j}} > \delta'.
\end{equation}
Therefore,
\begin{equation}
\delta_j - \condPrb{ \norm{ \dot f(\beta_{T_j}, Z_{ij}) }_2 \leq \epsilon }{\mathcal{F}_{T_j}} < \bar{\delta} - \delta' < 0.
\end{equation}
Apply Lemma \ref{lemma-prob-conc}, we conclude, for $j \geq J^*$, with probability one,
\begin{equation}
\condPrb{ \frac{1}{N_j} \sum_{i=1}^{N_j} \1{ \norm{ \dot f (\beta_{T_j}, Z_{ij})}_2 \leq \epsilon} \geq \delta_j }{\mathcal{F}_{T_j}} > 1 - \exp\left( - 2 (\delta' - \bar{\delta})^2 \right).
\end{equation}
Hence, the distribution of $J$, as defiend in \ref{stop-vote-ind}, for $j \geq J^*$ is controlled by a geometric distribution. We conclude $J$ is finite with probability one; that is, \ref{stop-vote-ind} is triggered in finite time with probability one.
\qed
\end{proof}

Our next step is to establish control over the probability of a false negative. Just as we did in Proposition \ref{prop-SC-2-detect-fn}, we will state these controls in terms of some parameters $\rho$ and $\gamma$; however, unlike in Proposition \ref{prop-SC-2-detect-fn}, we will need more control over the values of $\rho$. Specifically,
\begin{equation} \label{eqn-reduction-control}
\rho \in \left( 0, \sqrt{ \frac{ 1 - \bar{\delta} }{C_2 + 1} } \right) \quad\text{ or } \quad \rho \in \left( 0, (1 - \bar \delta )^{1/\pi_2}  \right),
\end{equation}
for Scenarios (b) or (c), respectively. The following result establishes control over the probability of a false negative for Scenario (b).

\begin{proposition}
Let $F$ be a \ref{bcn} function (\S \ref{subsection-bcn-specification}) with $C_1 = 0$ in \ref{condition-noise-grad}; and let $\lbrace \beta_k \rbrace$ be the iterates generated by Stochastic Gradient Descent satisfying \ref{prop-sympd} to \ref{prop-condnum} (\S \ref{subsection-sgd}). Let $\epsilon > 0$. Suppose $\rho$ satisfies \eqref{eqn-reduction-control} and $\gamma > 1$. If
\begin{equation}
N_j > \frac{\log(\gamma) }{2 \left(1 - \bar \delta - \rho^2/\gamma  \right)^2},
\end{equation}
then
\begin{equation}
\condPrb{ \frac{1}{N_j} \sum_{i=1}^{N_j} \1{ \norm{ \dot f (\beta_{T_j}, Z_{ij})}_2 \leq \epsilon} < \delta_j, ~ \norm{ \dot F (\beta_{T_j}) }_2 \leq \frac{\rho \epsilon}{\sqrt{\gamma}} }{\mathcal{F}_{T_j}} \leq \frac{1}{\gamma}.
\end{equation}
\end{proposition}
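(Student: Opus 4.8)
The plan is to follow the template of the second half of Proposition \ref{prop-SC-2-detect-fn}, but with the McDiarmid estimate of Lemma \ref{lemma-prob-conc} playing the role that the Markov bound of Lemma \ref{lemma-prob-bound} played there. First I would exploit that $\beta_{T_j}$, $N_j$, and $\delta_j$ are all $\mathcal{F}_{T_j}$-measurable, whereas the votes $\lbrace Z_{ij} \rbrace$ are i.i.d.\ copies of $X$ independent of $\lbrace \mathcal{F}_k \rbrace$. Since the event $\lbrace \norm{\dot F(\beta_{T_j})}_2 \le \rho\epsilon/\sqrt\gamma \rbrace$ lies in $\mathcal{F}_{T_j}$, the conditional probability of the joint false-negative event equals the indicator $\1{\norm{\dot F(\beta_{T_j})}_2 \le \rho\epsilon/\sqrt\gamma}$ times $\condPrb{V_j < \delta_j}{\mathcal{F}_{T_j}}$, where $V_j := \frac{1}{N_j}\sum_{i=1}^{N_j}\1{\norm{\dot f(\beta_{T_j},Z_{ij})}_2 \le \epsilon}$ is the empirical vote. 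Thus it suffices to bound the conditional vote-failure probability on the event $\lbrace \norm{\dot F(\beta_{T_j})}_2 \le \rho\epsilon/\sqrt\gamma \rbrace$.

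Next I would pin down the sign and magnitude of the relevant gap. Conditionally on $\mathcal{F}_{T_j}$ the iterate $\beta_{T_j}$ is frozen, so Lemma \ref{lemma-prob-bound} with $N = 1$ and $C_1 = 0$ gives, on that event,
\begin{equation}
\condPrb{\norm{\dot f(\beta_{T_j},Z_{ij})}_2 \le \epsilon}{\mathcal{F}_{T_j}} \ge 1 - \frac{(C_2+1)\norm{\dot F(\beta_{T_j})}_2^2}{\epsilon^2} \ge 1 - \frac{(C_2+1)\rho^2}{\gamma}.
\end{equation}
Writing $\Delta_j$ for $\delta_j$ minus this conditional probability and using $\delta_j < \bar\delta$, I obtain $\Delta_j \le -\left(1 - \bar\delta - (C_2+1)\rho^2/\gamma\right)$. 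The restriction $\rho \in \left(0,\sqrt{(1-\bar\delta)/(C_2+1)}\right)$ in \eqref{eqn-reduction-control} together with $\gamma > 1$ forces the right-hand side to be strictly negative, which is exactly the hypothesis $\Delta < 0$ demanded by Lemma \ref{lemma-prob-conc}.

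Finally I would apply Lemma \ref{lemma-prob-conc} in conditional form: given $\mathcal{F}_{T_j}$, the triple $(\beta_{T_j}, N_j, \delta_j)$ is deterministic and the $\lbrace Z_{ij} \rbrace$ remain i.i.d.\ copies of $X$ independent of $\mathcal{F}_{T_j}$, so the lemma applies with $\theta = \beta_{T_j}$, $N = N_j$, $\delta = \delta_j$, bounding the conditional vote-failure probability by $\exp(-2 N_j \Delta_j^2)$. Combining with $\Delta_j^2 \ge \left(1 - \bar\delta - (C_2+1)\rho^2/\gamma\right)^2$ and requiring $N_j$ to exceed the threshold $\log(\gamma)/(2\Delta_j^2)$ stated in the hypothesis makes the exponent at least $\log\gamma$, so the estimate falls below $1/\gamma$, as claimed. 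The main obstacle is this last conditional step: Lemma \ref{lemma-prob-conc} is stated unconditionally for fixed $\theta$, $N$, $\delta$, whereas $N_j$ and $\delta_j$ are genuinely random, so one must justify the ``freeze and apply'' reduction through a regular-conditional-probability (disintegration) argument, verifying that conditioning on $\mathcal{F}_{T_j}$ leaves only the independent votes random. Confirming the strict negativity of $\Delta_j$—and hence the admissible range of $\rho$—is the other point that needs care.
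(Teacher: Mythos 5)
Your proposal follows essentially the same route as the paper's proof: factor out the $\mathcal{F}_{T_j}$-measurable indicator, apply Lemma \ref{lemma-prob-bound} with $N=1$ and $C_1=0$ to verify $\Delta_j<0$ via \eqref{eqn-reduction-control}, and then invoke Lemma \ref{lemma-prob-conc} conditionally with the hypothesized $N_j$. The only substantive difference is that your careful bookkeeping retains the factor $C_2+1$, giving the single-vote bound $1-(C_2+1)\rho^2/\gamma$ where the paper writes $1-\rho^2/\gamma$; as you implicitly notice, this means the stated threshold $\log(\gamma)/\bigl(2(1-\bar\delta-\rho^2/\gamma)^2\bigr)$ does not quite match the gap you actually control, and the threshold consistent with your (correct) estimate is $\log(\gamma)/\bigl(2(1-\bar\delta-(C_2+1)\rho^2/\gamma)^2\bigr)$ --- a constant-level discrepancy inherited from the paper's own display rather than a flaw in your argument.
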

\begin{proof}
When $\Vert \dot {F}(\beta_{T_j}) \Vert_2 \leq \rho \epsilon / (\pi_3\gamma)$, by \eqref{eqn-reduction-control} and Lemma \ref{lemma-prob-bound},
\begin{equation}
\condPrb{ \norm{ \dot f (\beta_{T_j}, Z_{ij} ) }_2 \leq \epsilon }{\mathcal{F}_{T_j}} \geq 1 - \rho^2/\gamma > \bar{ \delta }.
\end{equation}
Therefore, 
\begin{equation}
\delta_j - \condPrb{ \norm{ \dot f (\beta_{T_j}, Z_{ij} ) }_2 \leq \epsilon }{\mathcal{F}_{T_j}} \leq \bar{\delta} + \rho^2/\gamma - 1 < 0.
\end{equation}
Hence, for the given choice of $N_j$,
\begin{align}
&\exp\left\lbrace - 2 N_j \left( \delta_j - \condPrb{ \norm{ \dot f (\beta_{T_j}, Z_{ij} ) }_2 \leq \epsilon }{\mathcal{F}_{T_j}} \right)^2 \right\rbrace \\
&\quad\leq \exp \left\lbrace -2N_j \left(\bar{\delta} + \rho^2/\gamma - 1  \right)^2 \right\rbrace \leq \frac{1}{\gamma}.
\end{align}

Noting that
\begin{equation}
\begin{aligned}
&\condPrb{ \frac{1}{N_j} \sum_{i=1}^{N_j} \1{ \norm{ \dot f (\beta_{T_j}, Z_{ij})}_2 \leq \epsilon} < \delta_j, ~ \norm{ \dot F (\beta_{T_j}) }_2 \leq \frac{\rho \epsilon}{\sqrt \gamma } }{\mathcal{F}_{T_j}} \\
&= \condPrb{ \frac{1}{N_j} \sum_{i=1}^{N_j} \1{ \norm{ \dot f (\beta_{T_j}, Z_{ij})}_2 \leq \epsilon} < \delta_j }{\mathcal{F}_{T_j}} \1{\norm{ \dot F (\beta_{T_j}) }_2 \leq \frac{\rho \epsilon}{\sqrt \gamma} },
\end{aligned}
\end{equation}
then, applying Lemma \ref{lemma-prob-conc} when the norm of $\dot{F}(\beta_{T_j})$ satisfies the hypothesized upper bound and $N_j$ satisfies the hypothesized lower bound, 
\begin{equation}
\condPrb{ \frac{1}{N_j} \sum_{i=1}^{N_j} \1{ \norm{ \dot f (\beta_{T_j}, Z_{ij})}_2 \leq \epsilon} < \delta_j }{\mathcal{F}_{T_j}} \leq \frac{1}{\gamma}.
\end{equation}
\qed
\end{proof}

We can derive a similar result for Scenario (c). 

\begin{proposition}
Let $F$ be a \ref{bcn} function (\S \ref{subsection-bcn-specification}) satisfying \ref{condition-tail-pareto}; and let $\lbrace \beta_k \rbrace$ be the iterates generated by Stochastic Gradient Descent satisfying \ref{prop-sympd} to \ref{prop-condnum} (\S \ref{subsection-sgd}). Let $\epsilon > 0$. Suppose $\rho$ satisfies \eqref{eqn-reduction-control} and $\gamma > 1$. If
\begin{equation}
N_j > \frac{\log(\gamma) }{2 \left(1 - \bar \delta - (\rho/\gamma)^{\pi_2}  \right)^2},
\end{equation}
then
\begin{equation}
\condPrb{ \frac{1}{N_j} \sum_{i=1}^{N_j} \1{ \norm{ \dot f (\beta_{T_j}, Z_{ij})}_2 \leq \epsilon} < \delta_j, ~ \norm{ \dot F (\beta_{T_j}) }_2 \leq \frac{\rho \epsilon}{\pi_3 \gamma} }{\mathcal{F}_{T_j}} \leq \frac{1}{\gamma}.
\end{equation}
\end{proposition}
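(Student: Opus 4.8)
The plan is to mirror the proof of the preceding proposition (Scenario (b)) almost verbatim, substituting the Pareto branch of Lemma \ref{lemma-prob-bound} for its Markov branch. Throughout I would condition on the $\mathcal{F}_{T_j}$-measurable event $\left\lbrace \norm{\dot F(\beta_{T_j})}_2 \leq \rho\epsilon/(\pi_3\gamma) \right\rbrace$ and work toward a lower bound on the single-draw success probability $p_j := \condPrb{\norm{\dot f(\beta_{T_j},Z_{ij})}_2 \leq \epsilon}{\mathcal{F}_{T_j}}$. As in the earlier propositions, the lemmas (stated for fixed $\theta$ and unconditional probability) are applied conditionally on $\mathcal{F}_{T_j}$, which is legitimate because $\beta_{T_j}$ is $\mathcal{F}_{T_j}$-measurable while the $\lbrace Z_{ij} \rbrace$ are i.i.d. copies of $X$ independent of $\mathcal{F}_{T_j}$.

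First I would verify the Pareto branch applies: on the conditioning event, $\pi_3 \norm{\dot F(\beta_{T_j})}_2 \leq (\rho/\gamma)\epsilon < \epsilon$ since $\rho < 1$ by \eqref{eqn-reduction-control} and $\gamma > 1$, so the hypothesis $\epsilon \geq \pi_3 \norm{\dot F}_2$ holds. Applying the second part of Lemma \ref{lemma-prob-bound} with $N = 1$ then yields $p_j \geq 1 - \left( \pi_3 \norm{\dot F(\beta_{T_j})}_2 / \epsilon \right)^{\pi_2} \geq 1 - (\rho/\gamma)^{\pi_2}$. Using the constraint $\rho < (1 - \bar\delta)^{1/\pi_2}$ from \eqref{eqn-reduction-control} together with $\gamma > 1$, I would conclude $(\rho/\gamma)^{\pi_2} \leq \rho^{\pi_2} < 1 - \bar\delta$, so that $p_j > \bar\delta > \delta_j$ and the signed gap $\Delta_j := \delta_j - p_j$ satisfies $\Delta_j < \bar\delta + (\rho/\gamma)^{\pi_2} - 1 < 0$.

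With $\Delta_j < 0$, Lemma \ref{lemma-prob-conc} applied conditionally on $\mathcal{F}_{T_j}$ bounds the vote-failure conditional probability (the event $\frac{1}{N_j}\sum_{i=1}^{N_j}\1{\norm{\dot f(\beta_{T_j},Z_{ij})}_2 \leq \epsilon} < \delta_j$) by $\exp(-2N_j \Delta_j^2)$. Since $|\Delta_j| > 1 - \bar\delta - (\rho/\gamma)^{\pi_2} > 0$, this is at most $\exp\left( -2N_j (1 - \bar\delta - (\rho/\gamma)^{\pi_2})^2 \right)$, and the hypothesized lower bound on $N_j$ pushes the exponent below $-\log\gamma$, giving a bound of $1/\gamma$. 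Finally I would write the joint conditional probability in the statement as the product of this vote-failure conditional probability with the $\mathcal{F}_{T_j}$-measurable indicator $\1{\norm{\dot F(\beta_{T_j})}_2 \leq \rho\epsilon/(\pi_3\gamma)}$, exactly as in the Scenario (b) proof: on the indicator's support the first factor is at most $1/\gamma$, and off it the product vanishes, yielding the claim.

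The two lemma applications are routine; the only point that genuinely requires care is the bookkeeping around \eqref{eqn-reduction-control}, namely checking that the reduction constraint on $\rho$ simultaneously forces $p_j > \bar\delta$ (so that McDiarmid's gap is strictly negative) and matches the gap $1 - \bar\delta - (\rho/\gamma)^{\pi_2}$ against the prescribed $N_j$. A secondary subtlety is ensuring the Pareto branch's hypothesis $\norm{\dot F}_2 \leq \pi_1$ holds on the conditioning event; this is automatic in the small-$\epsilon$ regime where $\rho\epsilon/(\pi_3\gamma) \leq \pi_1$, and otherwise should be folded into the operative hypotheses, consistent with the role of \ref{condition-tail-pareto} as a near-optimum concentration condition.
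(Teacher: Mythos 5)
Your proposal is correct and follows essentially the same route as the paper's own proof: apply the Pareto branch of Lemma \ref{lemma-prob-bound} with $N=1$ on the conditioning event to get the single-draw success probability above $\bar\delta$, feed the resulting negative gap into Lemma \ref{lemma-prob-conc}, and factor the joint conditional probability through the $\mathcal{F}_{T_j}$-measurable indicator. Your added remark about verifying the hypotheses $\epsilon \geq \pi_3\Vert\dot F\Vert_2$ and $\Vert\dot F\Vert_2 \leq \pi_1$ for the Pareto branch is in fact slightly more careful than the paper, which leaves those checks implicit.
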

\begin{proof}
When $\Vert \dot {F}(\beta_{T_j}) \Vert_2 \leq \rho \epsilon / (\pi_3\gamma)$, by \eqref{eqn-reduction-control} and Lemma \ref{lemma-prob-bound},
\begin{equation}
\condPrb{ \norm{ \dot f (\beta_{T_j}, Z_{ij} ) }_2 \leq \epsilon }{\mathcal{F}_{T_j}} \geq 1 - (\rho/\gamma)^{\pi_2} > \bar{ \delta }.
\end{equation}
Therefore, 
\begin{equation}
\delta_j - \condPrb{ \norm{ \dot f (\beta_{T_j}, Z_{ij} ) }_2 \leq \epsilon }{\mathcal{F}_{T_j}} \leq \bar{\delta} + (\rho/\gamma)^{\pi_2} - 1 < 0.
\end{equation}
Hence, for the given choice of $N_j$,
\begin{align}
&\exp\left\lbrace - 2 N_j \left( \delta_j - \condPrb{ \norm{ \dot f (\beta_{T_j}, Z_{ij} ) }_2 \leq \epsilon }{\mathcal{F}_{T_j}} \right)^2 \right\rbrace \\
&\quad\leq \exp \left\lbrace -2N_j \left(\bar{\delta} + (\rho/\gamma)^{\pi_2} - 1  \right)^2 \right\rbrace \leq \frac{1}{\gamma}.
\end{align}

Noting that
\begin{equation}
\begin{aligned}
&\condPrb{ \frac{1}{N_j} \sum_{i=1}^{N_j} \1{ \norm{ \dot f (\beta_{T_j}, Z_{ij})}_2 \leq \epsilon} < \delta_j, ~ \norm{ \dot F (\beta_{T_j}) }_2 \leq \frac{\rho \epsilon}{\pi_3 \gamma } }{\mathcal{F}_{T_j}} \\
&= \condPrb{ \frac{1}{N_j} \sum_{i=1}^{N_j} \1{ \norm{ \dot f (\beta_{T_j}, Z_{ij})}_2 \leq \epsilon} < \delta_j }{\mathcal{F}_{T_j}} \1{\norm{ \dot F (\beta_{T_j}) }_2 \leq \frac{\rho \epsilon}{\pi_3 \gamma} },
\end{aligned}
\end{equation}
then, applying Lemma \ref{lemma-prob-conc} when the norm of $\dot{F}(\beta_{T_j})$ satisfies the hypothesized upper bound and $N_j$ satisfies the hypothesized lower bound, 
\begin{equation}
\condPrb{ \frac{1}{N_j} \sum_{i=1}^{N_j} \1{ \norm{ \dot f (\beta_{T_j}, Z_{ij})}_2 \leq \epsilon} < \delta_j }{\mathcal{F}_{T_j}} \leq \frac{1}{\gamma}.
\end{equation}
\qed
\end{proof}

\begin{remark} The general case---Scenario (a)---can be addressed using these same techniques as long as $\epsilon$ is allowed to be sufficiently large so that we can provide a nontrivial upper bound on $\mathbb{P} [ \Vert \dot f (\theta, X ) \Vert_2 > \epsilon ]$. 
\end{remark}

To summarize, we have shown that under Scenarios (b) and (c), \ref{stop-vote-ind} will be triggered in finite time with probability one and the false negative probability can be controlled when $N_j$ is sufficiently large and $\Vert \dot {F}(\beta_{T_j}) \Vert_2$ is sufficiently small.

\section{Numerical Experiment} \label{section-experiment}
We will discuss, in order, the background of the experiment, the experimental setup, the results of the experiment, and the interpretation of the results.

\subsection{Background}

For the setting of this experiment, we train a neural network to classify greyscale images of ten different types of clothing \cite{xiao2017}. We make use of 60,000 examples in this training, and the objective function is defined as the average sparse categorical cross entropy over the labels of the sixty thousand examples and the predicted labels as produced by a neural network that performs the following steps:
\begin{enumerate}
\item It flattens the images into vectors of dimension $784$;
\item Then it multiplies the vectors by a $128 \times 784$ matrix (to be estimated), adds a $128$-dimension vector (to be estimated) to the product, and applies a rectified linear unit function to each entry of the result of the affine transformation, which results in a $128$-dimension vector;
\item Then it multiplies the $128$-dimension vector by a $128 \times 128$ matrix (to be estimated), adds a $128$-dimension vector (to be estimated) to the product, and applies a rectified linear unit function to each entry of the result of the affine transformation, which results in a $128$-dimension vector;
\item Repeats the preceding step two more times with independent matrices and biases that must also be estimated;
\item Finally, it multiplies the $128$-dimension vector by a $10 \times 128$ matrix (to be estimated), adds a $10$-dimension vector (to be estimated), and uses the index of the maximum entry of this vector as its predicted label.
\end{enumerate}
The resulting parameter that is being optimized is of dimension $151,306$. 

The objective function is optimized using batch stochastic gradient descent with a scalar learning rate 
\begin{equation}
M_k = 0.2\frac{375000}{k + 375000} I,
\end{equation}
and is terminated after two hundred epochs (i.e., total passes through the data set). Note, at the beginning of the optimization, the learning rate is $0.2$ and by the end it is $0.1$, where $k$ represents the total number of batches processed in the $200$ epochs. At the end of each epoch, the parameter value is stored, the value of the objective function is calculated, and the norm of the gradient function is calculated. The value of the objective function and gradient function at the end of each epoch is plotted in Figure \ref{figure-objective-gradient-by-epoch}.

\begin{figure}[htb]
\centering
\subfloat[]{{\includegraphics[width=5cm]{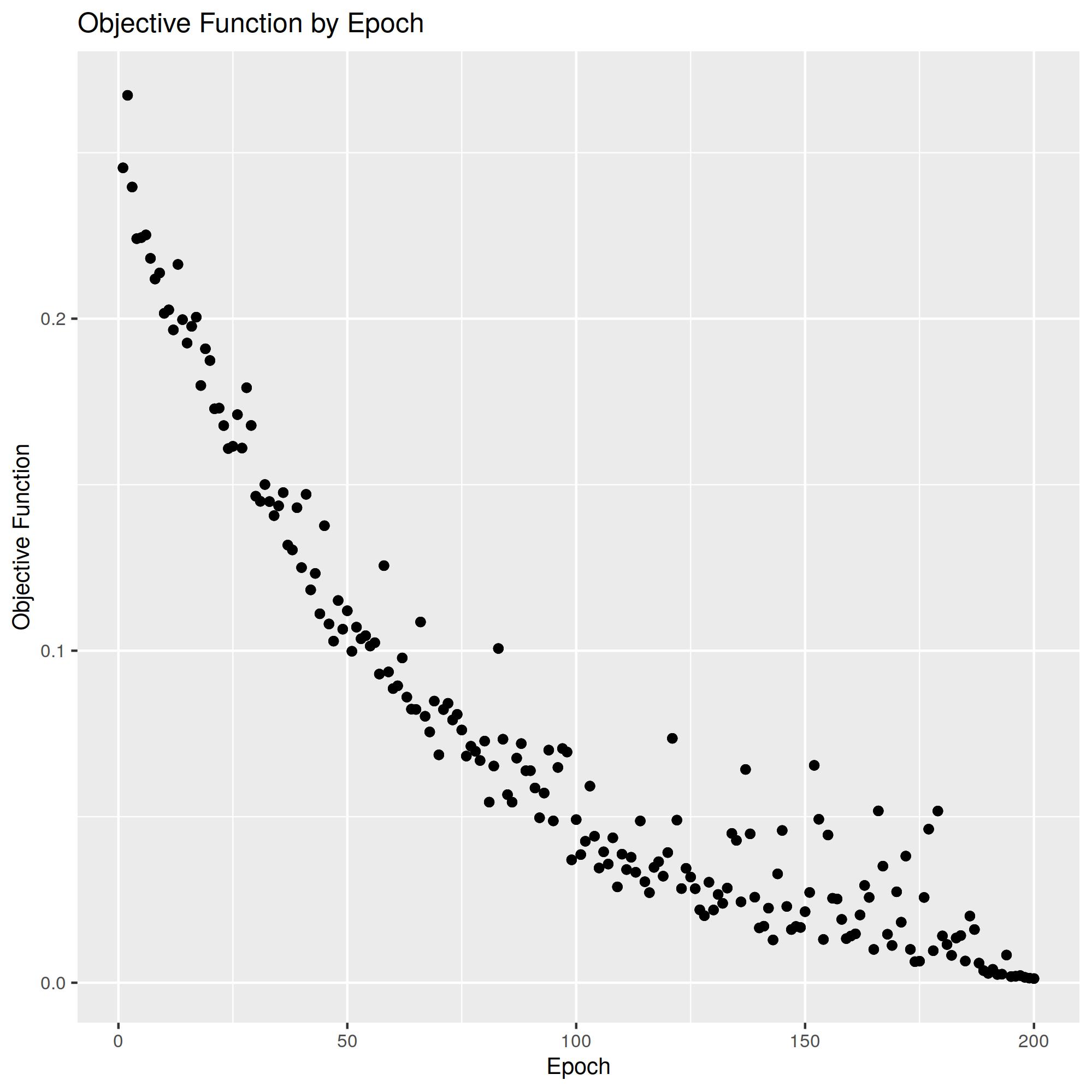} }}%
\qquad
\subfloat[]{{\includegraphics[width=5cm]{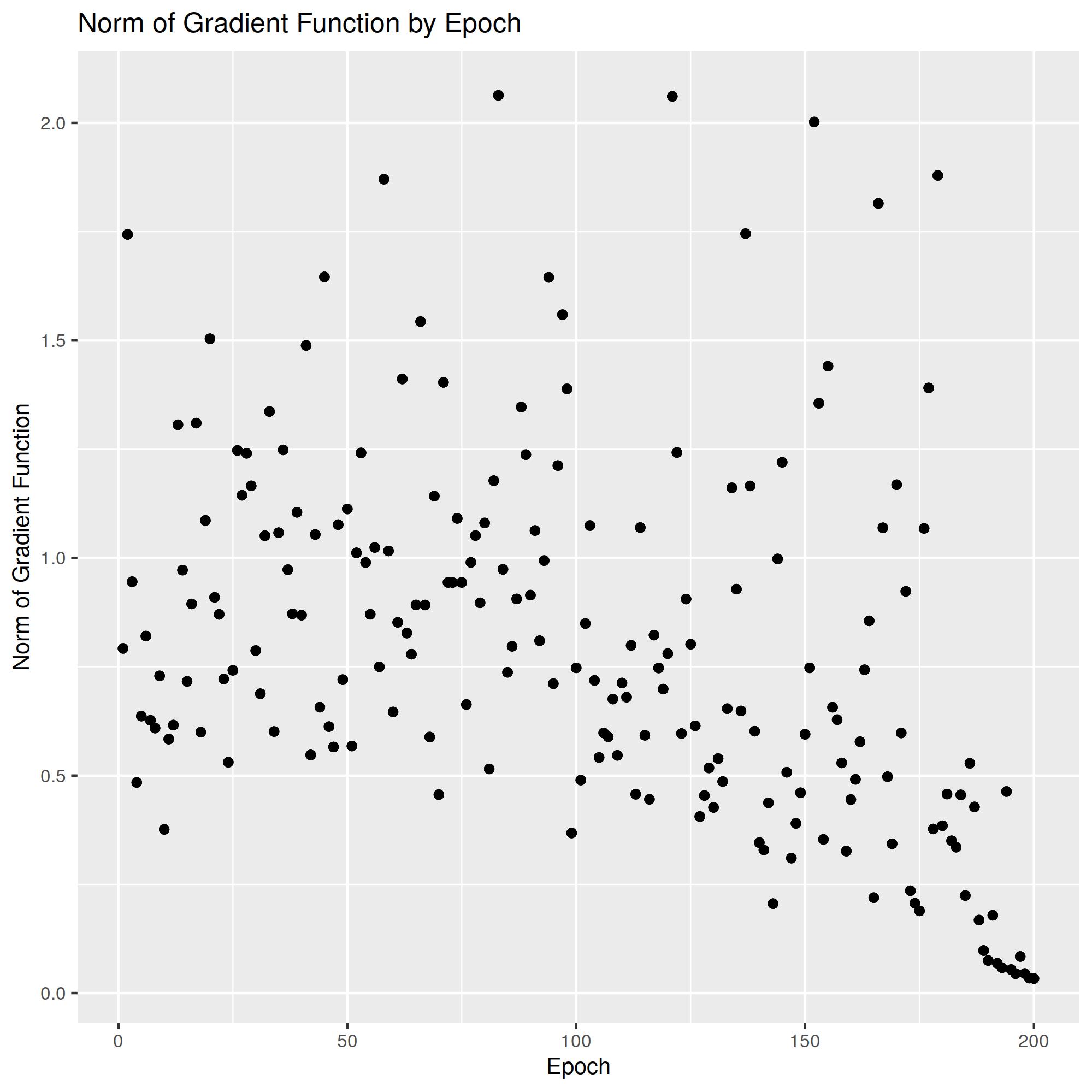} }}%
\caption{The value of the objective function (left) and norm of the gradient function (right) at the end of each of the two hundred epochs.}
\label{figure-objective-gradient-by-epoch}
\end{figure}

\subsection{Experimental Setup}

As an overview, at each of the recorded parameter values, the two stopping criteria are evaluated multiple times and the triggering rate (i.e., the number of times the stopping criterion is triggered divided by the total number of attempts) is recorded. Specifically, at each of the recorded parameter values---which occurs at the end of each epoch---, \ref{stop-mean-est} and \ref{stop-vote-ind} are evaluated one hundred times each using independent samples from the underlying data with the following specifications:
\begin{enumerate}
\item the threshold $\epsilon = 0.07$ (i.e., about twice the norm of the gradient at the final epoch);
\item the sample size $N_j$ takes value in $\lbrace 50,100,200\rbrace$; and,
\item for \ref{stop-vote-ind}, $\delta_j$ are kept constant and take value in $\lbrace 0.5, 0.6, 0.7, 0.8 \rbrace$. 
\end{enumerate}
In total, \ref{stop-mean-est} is evaluated on $600$ combinations ($200$ epochs, three distinct values of $N_j$), and \ref{stop-vote-ind} is evaluated on  $2,400$ combinations ($200$ epochs, three distinct values of $N_j$, four distinct values for $\delta_j$). Furthermore, each combination is evaluated independently $100$ times. At the end of each $100$ independent evaluations the triggering rate is calculated for each combination, which is the number of evaluated stopping criteria that are triggered divided by the total number of evaluated stopping criteria (i.e., $100$) for the given combination.  

\subsection{Results}

Figure \ref{figure-sc1-rate} shows the estimated triggering rate of \ref{stop-mean-est} as a function of the objective and as a function of the norm of the gradient. Note, Figure \ref{figure-sc1-rate}'s x-axes are reversed so as to roughly align with the increase in epochs; moreover, the points are jittered to allow for the zero values to be observable. Figure \ref{figure-sc1-rate} indicates that, as the objective function decreases (i.e., as we are presumably approach a solution) and as the norm of the gradient function decreases (i.e., as we are presumably approaching a stationary point), \ref{stop-mean-est} is more likely to be triggered. Moreover, Figure \ref{figure-sc1-rate} shows that the triggering rate is most variable for the case where $N_j = 50$ and least variable for the case of $N_j = 200$. 

\begin{figure}[htb]
\centering
\subfloat[]{{\includegraphics[width=5cm]{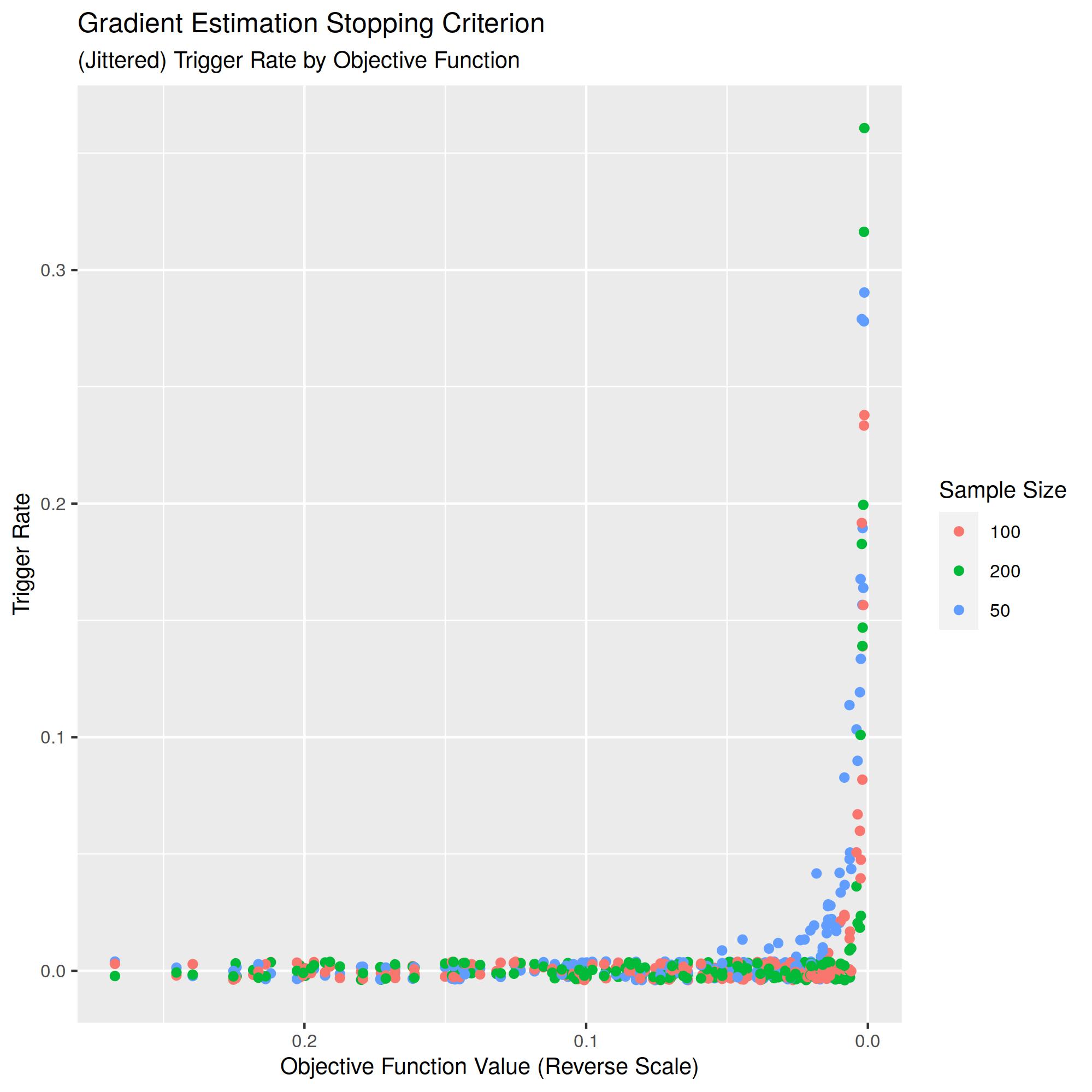} }}%
\qquad
\subfloat[]{{\includegraphics[width=5cm]{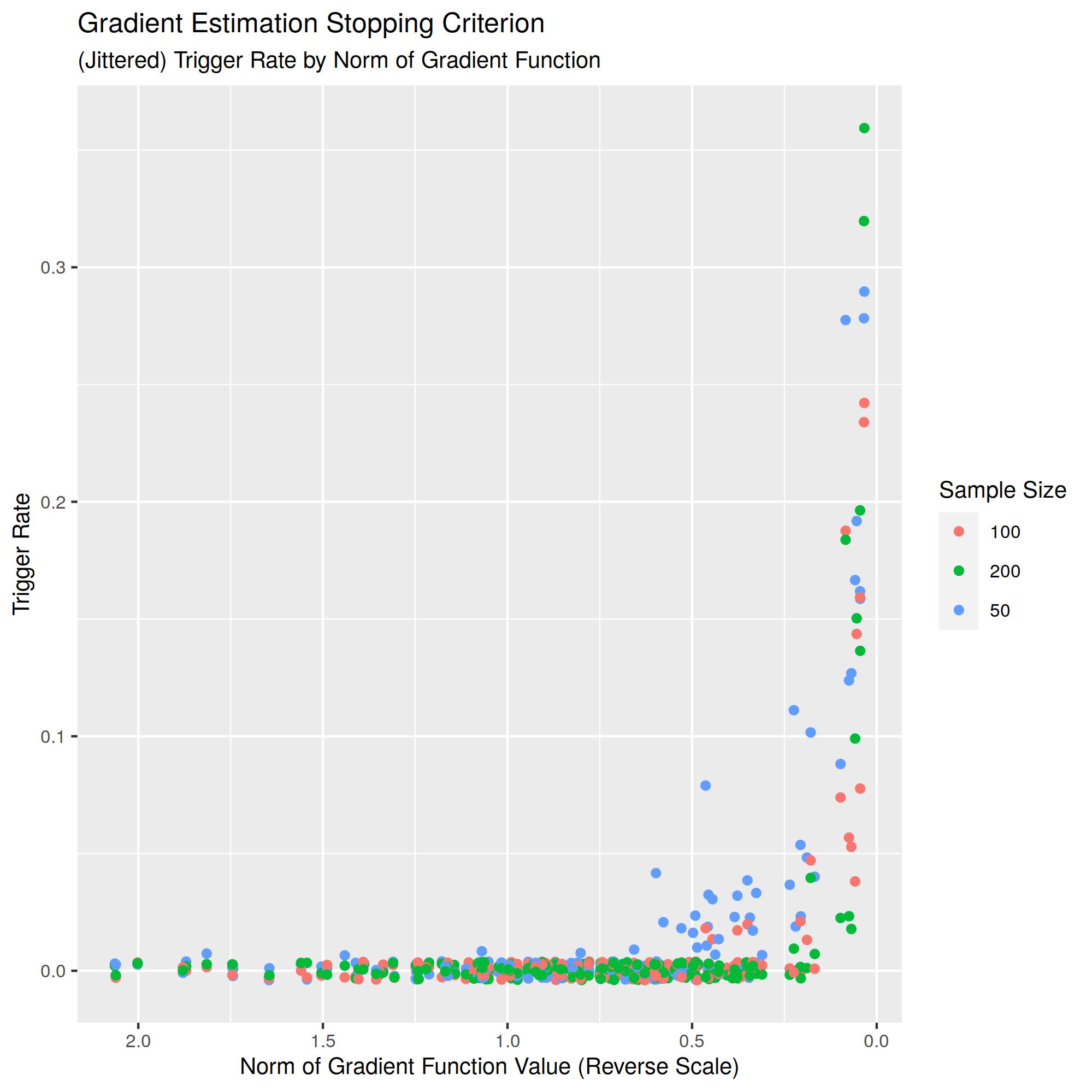} }}%
\caption{The triggering rate of \ref{stop-mean-est} by the value of the objective function (left) and norm of the gradient function (right). Note, the x-axis scales are reversed so as to roughly align with the increase in epochs.}
\label{figure-sc1-rate}
\end{figure}

Figure \ref{figure-sc2-rate} shows the estimated triggering rate of \ref{stop-vote-ind} as a function of the objective and as a function of the norm of the gradient. Note, Figure \ref{figure-sc1-rate}'s x-axes are reversed so as to roughly align with the increase in epochs; the points are jittered to allow for the zero values to be observable; and the panels each correspond to the different values of $\delta_j$ (i.e., the vote threshold). Figure \ref{figure-sc2-rate} indicates that, as the objective function decreases in value, \ref{stop-vote-ind} is more likely to be triggered---for the smaller vote thresholds, \ref{stop-vote-ind} is likely to be triggered at less optimal values, while, for larger vote thresholds, \ref{stop-vote-ind} is likely to be triggered close to the optimal value. On the other hand, Figure \ref{figure-sc2-rate} shows that, for all of the vote threshold values except for $\delta_j = 0.8$, \ref{stop-vote-ind} has no discernible pattern for when it is triggered. When $\delta_j = 0.8$, Figure \ref{figure-sc2-rate} shows that \ref{stop-vote-ind} is likely to be triggered only for small values of the gradient with the most variability occurring when $N_j = 50$ and least variability occurring  when $N_j = 200$. 

\begin{figure}[htb]
\centering
\subfloat[]{{\includegraphics[width=10cm]{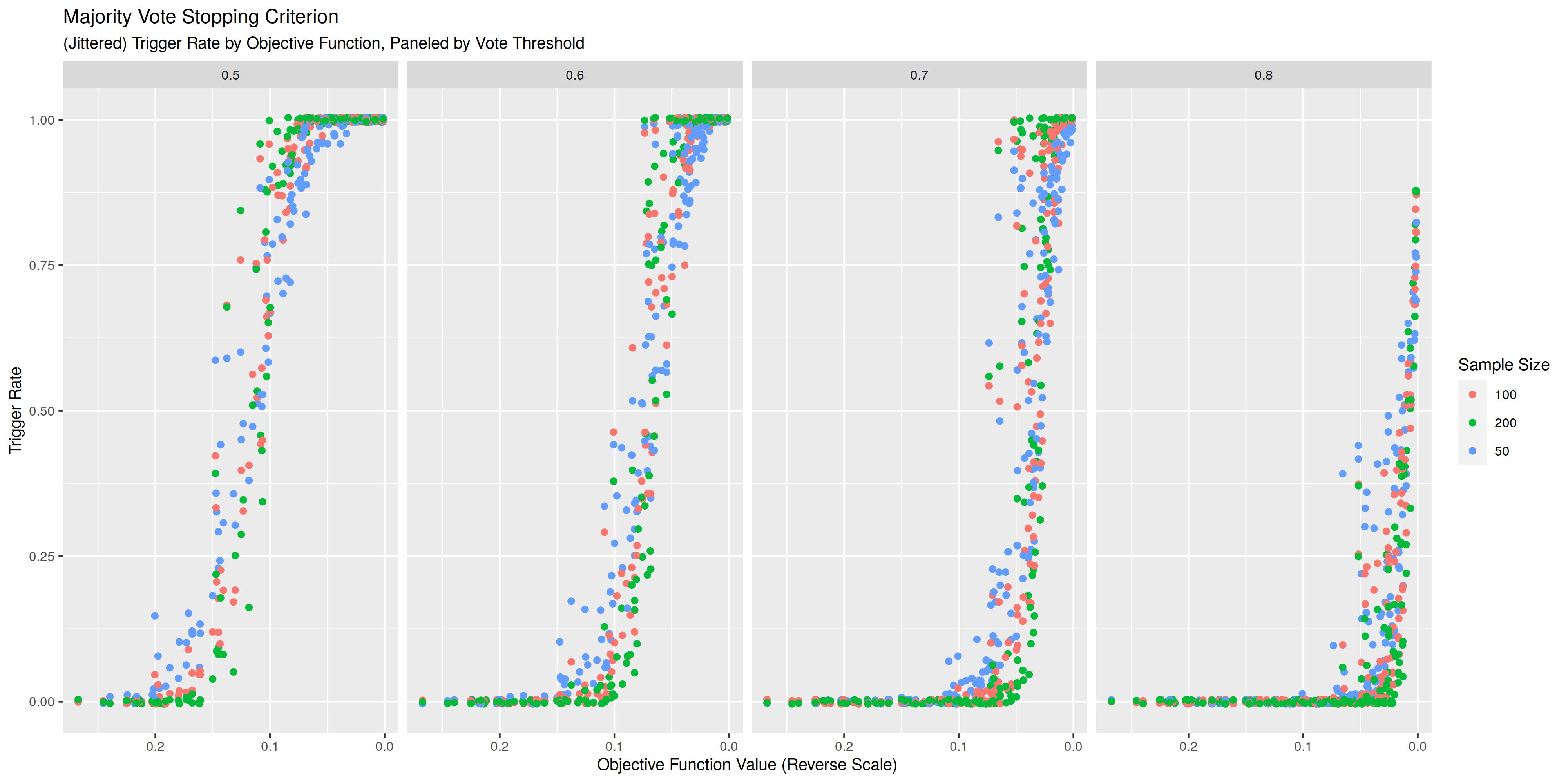} }} \\
\subfloat[]{{\includegraphics[width=10cm]{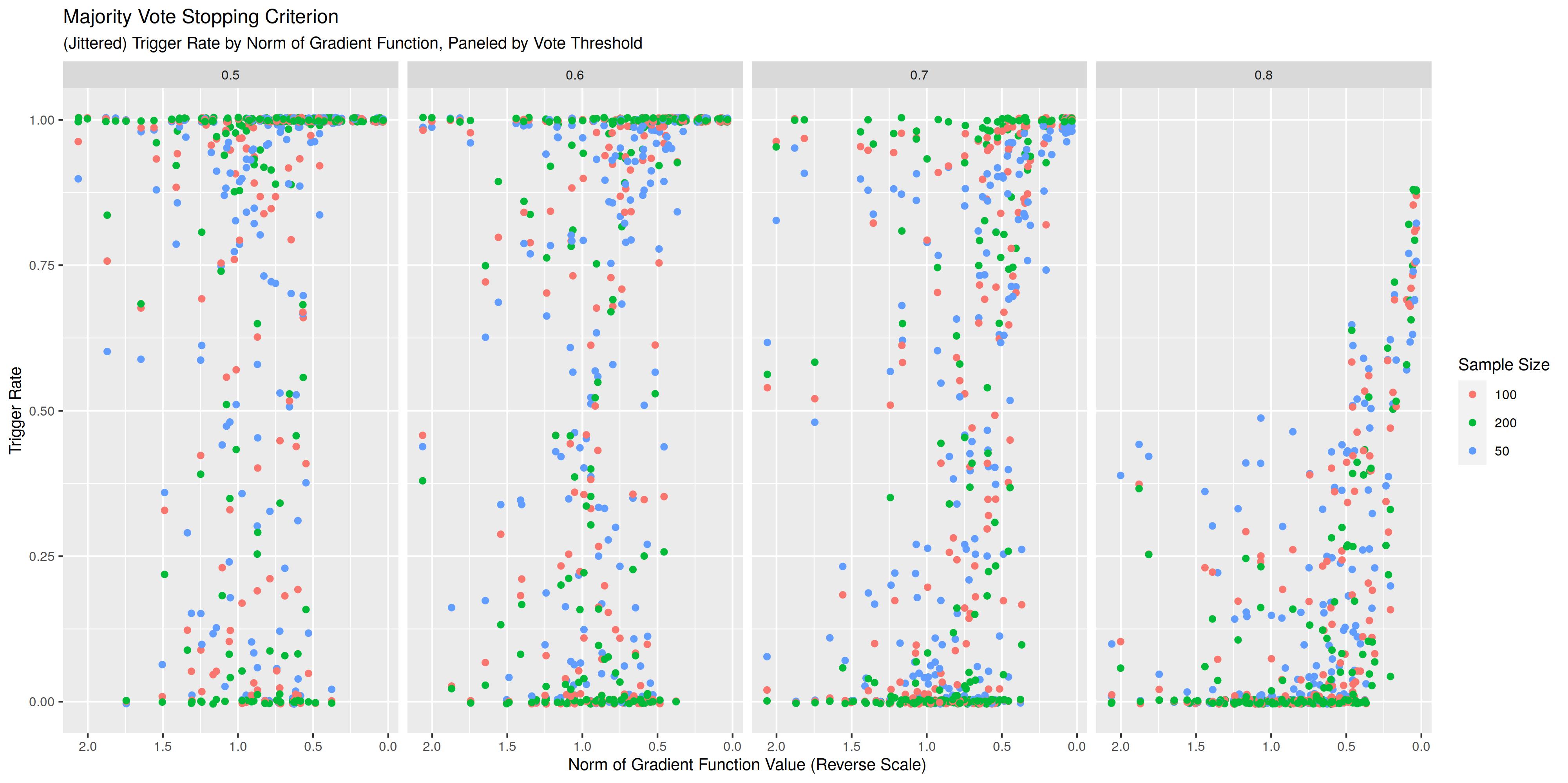} }}%
\caption{The triggering rate of \ref{stop-vote-ind} by the value of the objective function (left) and norm of the gradient function (right). Note, the x-axis scales are reversed so as to roughly align with the increase in epochs.}
\label{figure-sc2-rate}
\end{figure}

\subsection{Discussion}

As we would expect from the theory presented, \ref{stop-mean-est} correlates well with the decaying norm of the gradient and the probability of a false negative decays with increasing $N_j$. Moreover, \ref{stop-mean-est} is rather conservative: the probability of a false negative, even when $N_j = 200$ and the norm of the gradient is at its smallest, is $0.65$. Of course, $N_j = 200$ is less than $0.4\%$ of the total set of examples, and using more examples will better control the false negative probability, as we would expect.

Whereas \ref{stop-mean-est} behaved as we would expect, \ref{stop-vote-ind} is surprisingly more unpredictable. In particular, when the vote threshold was less than $0.8$, \ref{stop-vote-ind} was triggered for the larger values of the norm of the gradient. Unfortunately, for this particular problem, the distribution of the stochastic gradients are highly concentrated near zero with a handful that are far from zero. Therefore, when the vote threshold is low, this concentration of the stochastic gradients is likely to produce a false positive. However, when the vote threshold is increased, we see much more desirable behavior of \ref{stop-vote-ind}: it is being triggered only for smaller values of the norm of the gradient, and the false negative rate is closer to about $0.2$. Again, as expected from the theory, \ref{stop-vote-ind} is much more efficient with the observations in comparison to \ref{stop-mean-est}, but we must be careful and ensure that the value of $\delta_j$ is sufficiently large to prevent false positive signals.

\section{Conclusion} \label{section-conclusion}
In this work, our goal was to lay a rigorous foundation for stopping criteria for stochastic gradient descent (SGD) as applied to Bottou-Curtis-Nocedal (BCN) functions, which includes a broad class of convex and nonconvex functions. We started by developing a strong global convergence result for SGD on BCN functions, which generalizes previous results on the convergence of SGD on nonconvex functions. Then, we presented two stopping criteria and rigorously analyzed them.

This work has raised several questions that we enumerate below, and which we hope to address in future work. 
\begin{enumerate}
\item Given strong global convergence, what is the local rate of convergence to a stationary point? Are we even guaranteed to find a stationary point? Is this stationary point guaranteed to be a minimum? As evidenced by the many works cited, these issues are of great importance and can be answered more completely now that we have established strong global convergence.
\item Can something be said about dependent versions of the estimated stopping criteria? In some sense, dependent stopping criteria are ideal as they are the least wasteful stopping criteria. However, to develop such results, we need a maximal inequality over the norms of the iterates. 
\item For all of the stopping criteria, what are reasonable choices of $\lbrace T_j \rbrace$ and $\lbrace N_j \rbrace$? 
\item What are reasonable conditions to place on the lower tail probabilities (analogous to \ref{condition-tail-pareto}) and what are their implications for controlling the false positive probability of the stopping criteria studied in this work? 
\item Finally, is there a context in which \ref{condition-noise-obj} is more appropriate than \ref{condition-noise-grad}, and can the preceding results be developed in this context as well?
\end{enumerate}

\begin{acknowledgements}
We thank the reviewers for their detailed feedback, which has greatly improved the quality of this work.
\end{acknowledgements}

%
%

\bibliographystyle{spmpsci}      
\bibliography{strong_conv_sa.bib}   

\end{document}